\definecolor{darkblue}{rgb}{0,0,0.5}
\definecolor{darkred}{rgb}{0.5,0,0}
\definecolor{darkgreen}{rgb}{0,0.5,0}
\def\marginpar#1{\ignorespaces}
\def\Var{\textrm{Var}}
\def\to{\rightarrow}
\def\P{{\mathbb P}}        % probability
\def\E{{\mathbb E}}        % expectation 
\def\te{\rightarrow}
\def\Hn{H(n)}
\def\SH{H^*}
\def\Li{\textrm{Li}}
\renewcommand{\P}{\mathbb{P}}
\newcommand{\thrftwo}[6]{ \,_3F_2 \left( \left.  {\genfrac{}{}{0pt}{}{#1, #2, #3 }{#4, #5}} \right| #6 \right) }
\DeclareMathOperator\re{Re}
\newcommand{\eqth}{ \stackrel{\theta}{=} }
\newcommand{\eqone}{ \stackrel{1}{=} }
\newcommand{\ed}{ \stackrel{(d)}{=} }
\renewcommand{\P}{{\mathbb P }}
\newtheorem{theorem}{Theorem}[section]
\newtheorem{lemma}[theorem]{Lemma}
\newtheorem{proposition}[theorem]{Proposition}
\newtheorem{corollary}[theorem]{Corollary}
\newtheorem{conj}[theorem]{Conjecture}
\numberwithin{equation}{section}
\begin{document}
\title[Renewal sequences and record chains related to multiple zeta sums]{Renewal sequences and record chains related to multiple zeta sums}

\author[Jean-Jil Duchamps]{{Jean-Jil Duchamps}}
\address{Université Pierre et Marie Curie -- UPMC Univ Paris 6 (LPMA).
} \email{jean-jil.duchamps@normalesup.org}

\author[Jim Pitman]{{Jim} Pitman}
\address{Statistics department, University of California, Berkeley.
} \email{pitman@stat.berkeley.edu}

\author[Wenpin Tang]{{Wenpin} Tang}
\address{Statistics department, University of California, Berkeley.
} \email{wenpintang@stat.berkeley.edu}

\date{\today} 

%\begin{abstract}
%The subintervals of a random partition of $[0,1]$ obtained from a stick-breaking procedure are discovered by repeatedly throwing down uniform points.
%When the partition follows a GEM$(\theta)$ distribution, we examine the probability that the first $k$ discovered intervals are the $k$ leftmost intervals, giving expressions involving the Riemann zeta function and making use of a natural Markov chain in this model having an interpretation as a weak record chain.
%\end{abstract}
%

\begin{abstract}
For the random interval partition of $[0,1]$ generated by the uniform stick-breaking scheme 
known as GEM$(1)$, 
%%%which arises from the asymptotic relative sizes of cycles of a uniform random permutation of a large number of objects, 
let $u_k$ be the probability that the first $k$ intervals created by the stick-breaking scheme are also the first $k$ intervals to be discovered 
in a process of uniform random sampling of points from $[0,1]$.  Then $u_k$ is a renewal sequence. We prove that
$u_k$ is a rational linear combination of the real numbers $1, \zeta(2), \ldots, \zeta(k)$ where $\zeta$ is the Riemann zeta function,
and show that $u_k$ has limit $1/3$ as $k \to \infty$. Related results provide probabilistic interpretations of some
multiple zeta values in terms of a Markov chain derived from the interval partition. This Markov chain has the structure of a weak record chain.
Similar results are given for the GEM$(\theta)$ model, with beta$(1,\theta)$ instead of uniform stick-breaking factors, and
for another more algebraic derivation of renewal sequences from the Riemann zeta function.
\end{abstract}

\maketitle
\textit{Key words :} GEM model, Markov chains, multiple zeta values, occupation time, random permutations, record chains, renewal sequences, Riemann zeta values, 

\textit{AMS 2010 Mathematics Subject Classification:} 11M06, 60C05, 60E05

\setcounter{tocdepth}{1}
\tableofcontents
%-------------------------------------------------------------------------------------------------
\section{Introduction}
\quad Consider the following two sequences of random subsets of the set $[n]:= \{1, \ldots, n \}$,
generated by listing the cycles of a uniform random permutation $\pi_n$ of $[n]$ in two different orders: 
for a permutation $\pi_n$ with $K_n$ cycles, 
\begin{itemize} 
\item  let $C_{1:n}, C_{2:n}, \ldots$  be the cycles of $\pi_n$ in {\em order of least elements},
so $C_{1:n}$ is the cycle of $\pi_n$ containing $1$, if $C_{1:n} \ne [n]$ then $C_{2:n}$ is the cycle of $\pi_n$
containing the least $j \in [n]$ with $j \notin C_{1:n}$, and so on, 
with $C_{k:n} = \emptyset$ if $k > K_n$;
\item let $R_{1:n}, R_{2:n}, \ldots, R_{K_n:n}$ be the same cycles in  {\em order of greatest elements}, so
$R_{1:n}$ is the cycle of $\pi_n$ containing $n$, if $R_{1:n} \ne [n]$ then $R_{2:n}$ is the cycle of $\pi_n$
containing the greatest $j \in [n]$ with $j \notin R_{1:n}$, and so on, with $R_{k:n} = \emptyset$ if $k > K_n$.
\end{itemize}
For $1 \le k \le n$ define the probability 
\begin{equation}
\label{eq:ukn}
u_{k:n} := \P \left( \cup_{i=1}^k C_{i:n} = \cup _{i = 1}^k R_{i:n} \right),
\end{equation}
the probability that the same collection of $k$ cycles appears as the first $k$ cycles in  both orders.
It is elementary that the number of elements of $C_{1:n}$ has a discrete uniform distribution on $[n]$,
and the same is true for $R_{1:n}$.
Since $u_{1:n}$ is the probability that both $1$ and $n$ fall in the same cycle of $\pi_n$, it follows easily that $u_{1:n} = 1/2$ for every $n \ge 2$.
For $k \ge 2$ it is easy to give multiple summation formulas for $u_{k:n}$. Such formulas show 
that $u_{k:n}$  has some dependence on $n$ for $k \ge 2$, with limits
\begin{equation}
\label{uklim}
\lim_{n\to \infty } u_{k:n} = u_k \mbox{ for each } k = 1,2, \ldots, 
\end{equation}
which may be described as follows.
It is known \cite[p. 25]{abtlogcs} that the asymptotic structure of sizes of cycles of $\pi_n$, when listed in either order, and normalized by $n$, is that of the
sequence of lengths of subintervals of $[0,1]$ defined by the {\em uniform stick-breaking scheme}:
\begin{equation}
\label{wk:intro}
P_1:= W_1, \qquad P_2:= (1-W_1) W_2, \qquad P_3:= (1-W_1) ( 1 - W_2) W_3 , \cdots
\end{equation}
where the $W_i$ are i.i.d. uniform $[0,1]$ variables.  In more detail, 
\begin{equation}
P_k:= |I_k| \mbox{ where } I_k := [R_{k-1}, R_k) \mbox{ with } R_k = \sum_{i=1}^k P_k = 1 - \prod_{i=1}^k ( 1 - W_i ),
\end{equation}
with the convention $R_0=0$.
The distribution of lengths $(P_1,P_2, \ldots)$
so obtained, with $\sum_{i} P_i = 1$ almost surely,  is known as the GEM$(1)$ model, after Griffiths, Engen and McCloskey.
The limit probabilities $u_k$ are easily evaluated directly in terms of the limit model, as follows.
Let $(U_1, U_2, \ldots)$ be an i.i.d.\ uniform $[0,1]$ sequence of {\em sample points} independent of $(P_1, P_2, \ldots)$.
Say that an interval $I_k$ has been discovered by time $i$ if it contains at least one of the sample points $U_1, U_2, \ldots, U_i$.
The sampling process imposes a new order of discovery on the intervals $I_k$, which describes the large $n$ limit structure of the random
permutation of cycles of $\pi_n$ described above. 
The limit $u_k$ in \eqref{uklim} is $u_k:= \P(E_k)$,
the probability of the event $E_k$ in the limit model that the union of the first $k$ intervals to be discovered in the sampling process equals
$\cup_{i=1}^k I_i = [0,R_k)$ for $R_k = P_1 + \cdots + P_k$ as above. There are several different ways to express this event $E_k$.
The most convenient for present purposes is to consider the stopping time $n(k,1)$ when the sampling process first discovers a point not in $[0,R_k)$.
If at that time $n = n(k,1)$, for $1 \le i \le k$ there is at least one sample point $U_j \in I_i$ with $1 \le j < n$, then the event $E_k$ has occurred,
and otherwise not. Thus for $k = 1,2, \ldots$
\begin{equation}\label{eq:ukintro}
%u_k := \P\left (I_{j}\cap \{U_1, U_2, \ldots, U_{n(k,1)}\} \neq \emptyset, \; \forall j=1,2,\ldots,k\right ).
u_k = \P\left( \cap_{i=1}^k ( U_j \in I_i \mbox{ for some } 1 \le j < n(k,1) ) \right)
\end{equation}
and we adopt the convention that $u_0:=1$.
This sequence $(u_k, k\geq 0)$ is a renewal sequence appearing in the study of \emph{regenerative permutations} in \cite{PT17}.
In that context it is easily shown that the limit $u_\infty := \lim_{k\to \infty} u_k$ exists, but difficult to evaluate the $u_k$ for general $k$.
However, computation of $u_k$ for the first few $k=1,2,3,\ldots$  by symbolic integration suggested 
a general formula for 
$u_k$ as a rational linear combination of the real numbers $1, \zeta(2), \ldots, \zeta(k)$ where $\zeta$ is the Riemann zeta function,
\begin{equation*}
\zeta(s): = \sum_{n = 1}^{\infty} \frac{1}{n^s} \qquad \mbox{for } \re(s) > 1.
\end{equation*}
%and simulations strongly 
%the value $u_\infty = 1/3$, and 

\quad This article establishes the following result, whose proof leads to some probabilistic interpretations of multiple zeta values and harmonic sums.

\begin{proposition} 
\label{prop:conju}
The renewal sequence $(u_k,~k \ge 0)$ defined above by \eqref{eq:ukintro}
in terms of uniform stick-breaking is characterized by any one of the following equivalent conditions:
\begin{enumerate}[(i).]
\item
The sequence $(u_k,~ k \ge 0)$ is defined recursively by
\begin{equation}
\label{rec}
2 u_{k} + 3 u_{k-1} + u_{k-2} = 2 \zeta(k) \quad \mbox{with } u_0 = 1, \, u_1 = 1/2.
\end{equation}
\item
For all $k \ge 0$,
\begin{equation}
\label{rzs}
u_{k} = (-1)^{k-1} \left(2 - \frac{3}{2^k} \right) + \sum_{j=2}^{k}  (-1)^{k-j} \left(2 -\frac{1}{2^{k-j}} \right) \zeta(j).
\end{equation}
\item
For all $k \ge 0$,
\begin{equation}
\label{positive}
u_{k} = \sum_{j = 1}^{\infty} \frac{2}{j^k(j+1)(j+2)}.
\end{equation}
\item
The generating function of $(u_{k},~ k \ge 0)$ is 
\begin{equation}
\label{Uz}
U(z) : = \sum_{k=0}^{\infty}u_k z^k = \frac{2}{(1+z)(2+z)} \Bigg[ 1 +   \Bigg(2 - \gamma - \psi(1-z)\Bigg) z\Bigg],
\end{equation}
for $|z| < 1$, where 
$\gamma: = \lim_{n \rightarrow \infty} (\sum_{k=1}^n 1/k - \ln n) \approx 0.577$ is the Euler constant, and
$\psi(z): = \Gamma'(z)/\Gamma(z)$ with
$\Gamma(z): = \int_0^\infty t^{z-1} e^{-t} dt$, the digamma function. 
\end{enumerate}
\end{proposition}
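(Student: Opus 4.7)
The plan is first to establish the algebraic equivalence of (i)--(iv), so that it suffices to verify any single one of them for the probabilistically defined $u_k$, and then to carry out that verification.

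For the equivalences, the implication (iii)$\Rightarrow$(i) is a one-line calculation: since $4 + 6j + 2j^2 = 2(j+1)(j+2)$,
\[
2u_k + 3u_{k-1} + u_{k-2} \;=\; \sum_{j\ge 1}\frac{4 + 6j + 2j^2}{j^k(j+1)(j+2)} \;=\; 2\zeta(k),
\]
while $u_0 = 1$ and $u_1 = 1/2$ follow from telescoping and partial fractions. Since (i) is a second-order linear recursion with characteristic polynomial $2x^2 + 3x + 1 = (2x+1)(x+1)$, its unique solution matching the initial conditions is the closed form (ii), built from the homogeneous solutions $(-1)^k$ and $(-1/2)^k$ together with a particular solution linear in the forcings $\zeta(j)$. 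Finally, multiplying (i) by $z^k$, summing for $k\ge 2$, and using the classical Taylor expansion $\sum_{k\ge 2}\zeta(k) z^k = -\gamma z - z\psi(1-z)$ (obtained from the Weierstrass product for $\Gamma$) yields (iv) after solving for $U(z)$.

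It thus remains to show that the probabilistic $u_k$ of \eqref{eq:ukintro} satisfies one of (i)--(iv); I would target (iii). My approach is via Poissonization. Conditional on the GEM weights, each interval $I_i$ is first hit at an independent $\mathrm{Exp}(P_i)$ time and the complement $(R_k,1]$ at an independent $\mathrm{Exp}(q)$ time with $q := 1-R_k$, hence
\[
\P(E_k \mid P_1,\ldots,P_k) \;=\; \int_0^\infty q\,e^{-qt}\prod_{i=1}^k (1-e^{-P_i t})\,dt.
\]
Expanding the product by inclusion--exclusion and integrating in $t$ gives the cleaner form
\[
\P(E_k\mid P) \;=\; \sum_{S\subseteq [k]}\frac{(-1)^{|S|}\, q}{q + \sum_{i\in S}P_i}.
\]
Taking expectation over the i.i.d.\ uniforms $W_1,\ldots,W_k$, using $P_i = \prod_{j<i}(1-W_j)W_i$ and $q = \prod_{j\le k}(1-W_j)$, reduces each term to an explicit multi-dimensional integral.

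The main obstacle is organising and evaluating these $2^k$ terms. My plan is to encode each $S$ by its maximal runs of consecutive indices: on a run $[a,b]$ the partial sum $\sum_{i\in[a,b]}P_i$ telescopes to $q_{a-1}-q_b$, where $q_i := \prod_{j\le i}(1-W_j)$. The change of variables $q_i = e^{-T_i}$ turns the stick-breaking into a unit-rate Poisson process on $(0,\infty)$, after which each expectation becomes an iterated exponential integral; these are precisely the building blocks of the multiple zeta values promised by the abstract. The payoff I expect is the integral representation $u_k = 2\int_0^1(1-x)\,\Li_k(x)\,dx$, from which (iii) follows at once by expanding $\Li_k(x) = \sum_{j\ge 1}x^j/j^k$ and using $\int_0^1 x^j(1-x)\,dx = 1/[(j+1)(j+2)]$. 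Showing that the alternating $2^k$-term sum truly collapses to this single clean polylogarithm integral will be the technically hardest step.
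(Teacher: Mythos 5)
Your four-way algebraic equivalence is carried out correctly and a bit more explicitly than the paper does it: (iii)$\Rightarrow$(i) by $2 + 3j + j^2$-versus-$(j+1)(j+2)$, (i)$\Rightarrow$(ii) by solving the constant-coefficient linear recursion with characteristic roots $-1,-1/2$, and (i)$\Rightarrow$(iv) by summing $z^k$ times the recursion and using $\sum_{k\geq 2}\zeta(k)z^k = -z(\gamma+\psi(1-z))$. So it does suffice to verify that the probabilistic $u_k$ of \eqref{eq:ukintro} satisfies any one of the four; targeting (iii) is fine. Your route to (iii) — Poissonize, write $\P(E_k\mid P)=\int_0^\infty q e^{-qt}\prod_{i\le k}(1-e^{-P_i t})\,dt$, expand by inclusion--exclusion, and take the GEM expectation — is a genuinely different strategy from the paper. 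The paper instead invokes the weak-record Markov chain of Lemma \ref{lemma:PT} to write $u_k$ as a single multiple Hurwitz zeta sum $h_k(2)=\zeta(1,\ldots,1,2;2)$, derives the three-term recursion \eqref{rec} from a one-step shift identity $h_k(x-1)=h_k(x)+x^{-1}h_{k-1}(x)$, and closes the loop with the Hoffman--Zagier duality $\zeta(1,\ldots,1,2)=\zeta(k)$; multiple zeta values and duality never appear in your outline.

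The genuine gap is that your target identity $u_k = 2\int_0^1 (1-x)\,\Li_k(x)\,dx$ is asserted but not proved. You yourself flag the collapse of the $2^k$-term alternating sum $\sum_{S\subseteq[k]}(-1)^{|S|}\,\E\!\left[\frac{q}{q+\sum_{i\in S}P_i}\right]$ to this single polylogarithm integral as ``the technically hardest step,'' and the sketch offered (encode $S$ by runs, change variables to a Poisson process, iterated exponential integrals) is only a plan of attack, not an argument. Even the simplest non-prefix term is not elementary: already for $k=2$ and $S=\{1\}$ one has $\E\!\left[\frac{(1-W_1)(1-W_2)}{(1-W_1)(1-W_2)+W_1}\right]=2-\zeta(2)$, which requires the integral evaluation $\int_0^1 \frac{t\log t}{1-t}\,dt = 1-\zeta(2)$; nothing in your outline exhibits the mechanism by which the $2^k$ such terms telescope into $2\sum_j j^{-k}/[(j+1)(j+2)]$ for general $k$. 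Since this is precisely the content of the proposition (the algebraic part being routine), the proof is incomplete. To make it work you would need either a closed-form evaluation of $\E[q/(q+\sum_{i\in S}P_i)]$ uniform in $S$ together with a combinatorial identity organizing the signs, or a direct manipulation of the integral $\int_0^1 q y^{q-1}\prod_i(1-y^{P_i})\,dy$ after changing variables to the exponential/Poisson representation of the $Q_j=\prod_{i\le j}(1-W_i)$. Absent that, the paper's record-chain-plus-duality route remains the only complete argument presented.
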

The proof is given in Section \ref{sec:prop_proof}.
We are also interested in $(u_k)$ when the partition $(I_k)$ follows a more general stick-breaking scheme, where the $(W_k)$ in 
\eqref{wk:intro}
are i.i.d.\ with an arbitrary distribution on $(0,1)$.
We will develop in particular the case, for $\theta>0$, where $W_k$ follows a beta$(1,\theta)$ distribution  -- with density $\theta(1-x)^{\theta-1}$ on $[0,1]$.
The sequence $u_k$ so defined is the limit \eqref{uklim} if the distribution of the random permutation $\pi_n$ is changed from the uniform distribution on permutations of $[n]$
to the {\em Ewens $(\theta)$ distribution} on permutations of $[n]$, in which the probability of any particular permutation of $[n]$ with $k$ cycles is $\theta^k/(\theta)_n$ instead of $1/(1)_n$,
where $(\theta)_n:= \theta ( \theta + 1) \cdots ( \theta + n-1)$ is a rising factorial.
In that case the limit distribution of interval lengths $(P_1, P_2, \ldots)$ is known as the GEM$(\theta)$ distribution \cite[\S 5.4]{abtlogcs}.
Our expressions for $u_k$ in this case are less explicit.
In the following, the notation $\stackrel{\theta}{=}$ indicates evaluations for the GEM$(\theta)$ model.
For instance, it was proved in \cite[(7.16)]{PT17} that
\begin{equation}
\label{PT17formula}
u_{\infty} \stackrel{\theta}{=} \frac{\Gamma(\theta+2) \Gamma(\theta+1)}{\Gamma(2\theta+2)} \stackrel{1}{=} \frac{1}{3}.
\end{equation}
%Further partial results for general $\theta$ are given in Sections \ref{sec:gem_theta_records} and \ref{sec:prop_proof}.

A consequence of Proposition \ref{prop:conju} is that for each $k\geq 1$, the right-hand side of \eqref{rzs} is positive -- in fact strictly greater than $u_{\infty} \stackrel{1}{=} 1/3$.
Also, the probability $f_k$ of  a first renewal at time $k$, which is determined by $u_1, \ldots, u_k$ by a well known recursion recalled later in \eqref{ufrec}, is also strictly positive.
These inequalities seem not at all obvious without the probabilistic interpretations offered here. The inequalities are reminiscent of Li's criterion \cite{Li} for the Riemann hypothesis,
 which has some probabilistic interpretations indicated in \cite[Section 2.3]{BPY}.
 The GEM$(1)$ model also arises from the asymptotics of prime factorizations \cite{DG}, but the results for sampling from GEM(1) described here do not seem easy to interpret in that setting.

\quad The interpretation of $u_k$ sketched above and detailed in \cite{PT17},
 that $u_k$ is the probability that the random order of discovery of intervals maps $[k]$ to $[k]$,
yields the following corollary.
\begin{corollary}
For ${\bf w}: = (w_1, w_2, \ldots) \in (0,1)^{\mathbb{N}_{+}}$, let $p_i({\bf w}) : = (1-w_1) \cdots (1-w_{i-1}) w_i$. Then for each $k \ge 1$, the expression
\begin{equation}
\label{identity}
\sum_{\pi \in \mathfrak{S}_k} \int_{(0,1)^k} p_{\pi(1)}({\bf w}) \prod_{i=2}^{k} \frac{p_{\pi(i)}({\bf w})}{1-\sum_{j=1}^{i-1} p_{\pi(j)}({\bf w})} dw_1 \cdots dw_k
\end{equation}
is equal to \eqref{rzs} and to \eqref{positive}, where $\mathfrak{S}_k$ is the set of permutations of the finite set $\{1,\cdots,k\}$.
%Consequently,
%\begin{equation}
%\label{inequality}
%(-1)^{k-1} \left(2 - \frac{3}{2^k} \right) + \sum_{j=2}^{k}  (-1)^{k-j} \left(2 -\frac{1}{2^{k-j}} \right) \zeta(j) > 0 \quad \mbox{for all } k \ge 1.
%\end{equation}
\end{corollary}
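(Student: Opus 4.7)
The plan is to express $u_k$ as an integral by conditioning on the stick-breaking weights $(W_i)$ and then summing over the $k!$ possible orders in which $I_1,\ldots,I_k$ can be discovered by the sampling process. Once the sum in \eqref{identity} is identified with $u_k$, the two claimed equalities are immediate from parts (ii) and (iii) of Proposition \ref{prop:conju}.

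First I would fix a realization ${\bf w} \in (0,1)^{\mathbb{N}_+}$, so that the interval lengths become $P_i = p_i({\bf w})$. Given these deterministic lengths, the successive i.i.d.\ uniform sample points $U_1, U_2, \ldots$ produce a size-biased discovery order: any sample landing in an already-discovered interval carries no new information, and conditionally on the first $i-1$ new intervals being $I_{\pi(1)}, \ldots, I_{\pi(i-1)}$, the probability that the next newly discovered interval is $I_{\pi(i)}$ equals $P_{\pi(i)}$ divided by the total length $1-\sum_{j=1}^{i-1} P_{\pi(j)}$ of the undiscovered intervals. Multiplying these conditional probabilities, for each permutation $\pi \in \mathfrak{S}_k$ the probability that the first $k$ intervals discovered, in chronological order, are exactly $I_{\pi(1)},\ldots,I_{\pi(k)}$ equals
\begin{equation*}
p_{\pi(1)}({\bf w}) \prod_{i=2}^{k} \frac{p_{\pi(i)}({\bf w})}{1-\sum_{j=1}^{i-1} p_{\pi(j)}({\bf w})}.
\end{equation*}

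Summing over $\pi \in \mathfrak{S}_k$ gives the conditional probability, given ${\bf w}$, of the event $E_k$ appearing in \eqref{eq:ukintro}. Because $(W_1,\ldots,W_k)$ is uniform on $(0,1)^k$ (density $1$) and the integrand depends only on $(w_1,\ldots,w_k)$ (each $p_i$ for $i\le k$ involves only $w_1,\ldots,w_i$, and $\pi$ permutes $\{1,\ldots,k\}$), integrating against Lebesgue measure on $(0,1)^k$ via Fubini yields $\P(E_k) = u_k$, which is exactly \eqref{identity}. The asserted equalities with \eqref{rzs} and \eqref{positive} then follow from Proposition \ref{prop:conju}. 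The only place requiring any care is the derivation of the size-biased discovery formula, but this is a standard consequence of the memoryless choice made by i.i.d.\ uniform sampling on an interval partition; everything else is a direct substitution.
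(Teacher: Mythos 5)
Your proof is correct and matches the paper's (implicit) approach: the paper simply asserts that the interpretation of $u_k$ as the probability that the size-biased discovery order maps $[k]$ to $[k]$, combined with Proposition \ref{prop:conju}, yields the corollary, and your argument fills in exactly the conditioning-on-weights and permutation-sum calculation that this assertion relies on. The only point worth making explicit, which you correctly handle, is that for $i\le k$ the quantity $p_i(\mathbf{w})$ depends only on $w_1,\ldots,w_k$, so the expectation over the full GEM$(1)$ sequence reduces to a Lebesgue integral over $(0,1)^k$.
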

The expression \eqref{rzs} gives a {\em rational zeta series expansion} of the multiple integral \eqref{identity}. 
Similar expansions also appeared in Beukers' proof \cite{Beukers} of the irrationality of $\zeta(3)$.
The expression \eqref{identity} is a sum of $k!$ positive terms, while \eqref{rzs} is a linear combination of $1,\zeta(2),\cdots,\zeta(k)$ with alternating signs. 
By symbolic integration, we can identify each term of the sum in \eqref{identity} for $k=2,3$, but some terms become difficult to evaluate for $k \ge 4$, and we have
no general formula for these terms, no direct algebraic explanation of why the terms in \eqref{identity} should sum to a rational zeta series.

\quad The Riemann zeta function plays an important role in analytic number theory \cite{Edwards,Bombieri}, and has applications in geometry \cite{Witten,Ev} and mathematical physics \cite{Berry,Kirsten}.
Connections between the Riemann zeta function and probability theory have also been explored, for example:
\begin{itemize}
\item
For each $s > 1$, the normalized terms of the Riemann zeta series define a discrete probability distribution of a random variable $Z_s$ with values on $\{1, 2, \cdots\}$, such that $\log Z_s$ 
has a compound Poisson distribution \cite{ABR,LH,Gut}.
\item
The values $\zeta(2)$ and $\zeta(3)$ emerge in the limit of large random objects \cite{Frieze,AldouS}.
\item
The values $1/\zeta(n)$ for $n = 2,3 \ldots$ arise from the limit proportion of $n$-free numbers; that is, numbers not divisible by any $n$-th power of a natural number, see \cite{EL, AN}.
\item 
The values $\zeta(1/2-n)$ for $n\geq 0$ appear in the expected first ladder height of Gaussian random walks \cite{chang_ladder_1997}.
\item
The Riemann zeta function appears in the Mellin transforms of functionals of Brownian motion and Bessel processes \cite{Williams,BPY}.
\item Conjectured bounds for the zeta function on the critical line $\Re(s) = 1/2$ can be related to branching random walks \cite{arguin_maxima_2017}.
\item
There are striking parallels between the behavior of zeros of the Riemann zeta function on the line $\Re(s) = 1/2$ and the structure of eigenvalues in random matrix theory \cite{Montgomery,KSarnak,Odly}.
\end{itemize}
In the early $1990$s, Hoffman \cite{Hoffman} and Zagier \cite{Zagier} introduced the {\em multiple zeta value}
\begin{equation}
\label{multzeta}
\zeta(s_1,\cdots,s_k): = \sum_{0< n_1 < \cdots < n_k } \frac{1}{n_1^{s_1} \cdots n_k^{s_k}},
\end{equation}
and the {\em multiple zeta-star value}
\begin{equation}
\label{multzetastar}
\zeta^{*}(s_1,\cdots,s_k): = \sum_{0< n_1 \leq \cdots \leq n_k } \frac{1}{n_1^{s_1} \cdots n_k^{s_k}},
\end{equation}
for each $k>0$, and $s_i \in \mathbb{N}_{+}: = \{1,2,\cdots\}$ with $s_1 > 1$ to ensure the convergence.
Note that the multiple zeta-star value \eqref{multzetastar} can be written as the sum of multiple zeta values:
\begin{equation*}
    \zeta^*(s_1,\cdots,s_k) = \sum_{{\bf s}^*} \zeta({\bf s}^*),
\end{equation*}
where the sum is over all ${\bf s}^* = (s_1 \square \cdots \square s_k)$, with each $\square$ filled by either a comma or a plus.
To illustrate,
\begin{align*}
    & \zeta^*(s_1,s_2) = \zeta(s_1,s_2) + \zeta(s_1+s_2), \\
    & \zeta^*(s_1,s_2,s_3) = \zeta(s_1,s_2,s_3)  + \zeta(s_1,s_2+s_3) + \zeta(s_1+s_2,s_3) + \zeta(s_1+s_2+ s_3).
\end{align*}  
See \cite{BBBL,Hoffman05,AKO} for the algebraic structure, and some evaluations of multiple zeta values.
It was proved in \cite{AET,Zhao} that the multiple zeta functions \eqref{multzeta}-\eqref{multzetastar} can also be continued meromorphically on the whole space $\mathbb{C}^k$. 

\quad These multiple zeta values appear in various contexts including algebraic geometry, knot theory, and quantum field theory, see \cite{GF17}. 
But we are not aware of any previous probabilistic interpretation of these numbers.
In this article we show how the zeta values $\zeta(2),\zeta(3),\ldots$ and \eqref{multzeta}-\eqref{multzetastar} arise in the renewal sequence $(u_k)$ associated with the discovery of intervals for a GEM$(1)$ partition of $[0,1]$.
Equivalently the same sequence $(u_k)$ can be expressed in terms of a GEM$(1)$-biased permutation of $\mathbb{N}_{+}$ \cite{PT17}, or of the {\em Bernoulli sieve} \cite{Gnedinsieve} driven by the GEM$(1)$ distribution.

\bigskip

{\bf Organization of the paper:} The rest of the paper is organized as follows.
\begin{itemize}
\item In Section \ref{sec:gem_theta_records} we introduce the main tool of our analysis, a Markov chain $(\widehat{Q}_k)$
derived from
%%naturally appearing in 
the discovery process of subintervals 
in the GEM$(\theta)$ stick-breaking model, and show its equality in distribution with a weak record chain.
\item In Section \ref{sec:prop_proof} we give the proof of Proposition \ref{prop:conju} for $\theta = 1$, and provide some partial results for general $\theta$.
\item In Section \ref{sec:renewal_seq} we define a number of renewal sequences satisfying a recursion involving the Riemann zeta function.
\item In Section \ref{sec:gem_one} we specialize again to $\theta=1$ and examine further the distribution of the Markov chain $(\widehat{Q}_k)$, deriving expressions involving iterated harmonic sums and zeta values.
\item In Section \ref{sec:u2} we derive a formula for $u_{2:n}$ associated with random permutations, which provides evaluation of $u_2$ for general $\theta$ as the limit.
Among those we identify the sequence $(u_k)$ defined by \eqref{eq:ukintro} in the GEM$(1)$ model.
\end{itemize}
%-------------------------------------------------------------------------------------------------
\section{One-parameter Markov chains and record processes}
\label{sec:gem_theta_records}

\quad Recall the definition \eqref{wk:intro} of the length $P_k = |I_k|$ of the $k$-th interval in a stick-breaking partition and the uniform sequence $(U_i)$ of points that we use to discover intervals.
Now define a random sequence of positive integers $(X_i)$ by setting
\begin{equation}\label{eq:def_xi}
X_i := k \iff U_i \in I_k .
\end{equation} 
So $X_i$ is the rank of the interval in which the $i$-th sample point $U_i$ falls.
Conditionally given the sequence of interval lengths $(P_1,P_2, \ldots)$, the
$X_i$ are i.i.d. according to this distribution on $\mathbb{N}_{+} := \{1,2, \ldots \}$.
Formula \eqref{eq:ukintro} can be recast as
\begin{equation}\label{eq:uk_redef}
u_k = \P\left (\{X_1, X_2, \ldots, X_{n(k,1)-1}\}=\{1,2,\ldots,k\}\right ),
\end{equation}
where $n(k,1) = \inf\{i \geq 1, \, X_i \geq k+1\}$.

\quad The key to our analysis is the Markov chain $(\widehat{Q}_k)$ given by the following lemma from \cite[Lemma 7.1]{PT17}.
This lemma is suggested by work of Gnedin and coauthors on the Bernoulli sieve \cite{Gsmall,GIM}, and subsequent work on extremes and gaps in sampling from a RAM by Pitman and Yakubovich \cite{PY17, P17}.
\begin{lemma}
\label{lemma:PT}
Let $X_1, X_2, \ldots$ be as in \eqref{eq:def_xi} for a stick-breaking partition with i.i.d.\ factors $W_i \stackrel{(d)}{=} W$ as in \eqref{wk:intro}
for some distribution of $W$ on $(0,1)$.
For $n \in \mathbb{N}_{+}$ and $k = 0,1, \ldots$ let 
\begin{equation}
Q_n^*(k):= 
\sum_{i=1}^n 1 (X_i > k ) = \sum_{i=1}^n 1 (U_i \ge  R_k )
\end{equation}
represent the number of the first $n$ sample points which land outside the union $[0, R_k)$ of the first $k$ intervals. For $m = 1,2, \ldots$ 
let $n(k,m):= \min \{n : Q_n^*(k) = m \}$ be the first time $n$ that there are $m$ sample points outside the first $k$ intervals. 
Then:
\begin{enumerate}[(i).]
\item
For each $k$ and $m$ there is the equality of joint distributions
\begin{equation}
\left( Q_{n(k,m)} ^* (k-j), 0 \le j \le k \right) \stackrel{(d)}{=} \left( \widehat{Q}_j, 0 \le j \le k \,\middle|\, \widehat{Q}_0 = m \right)
\end{equation}
where $(\widehat{Q}_0, \widehat{Q}_1, \ldots)$ with $1 \le \widehat{Q}_0 \le \widehat{Q}_1 \cdots$ is a Markov chain with state space 
$\mathbb{N}_{+}$ 
and stationary transition probability function
\begin{equation}
\label{hatqdef}
\widehat{q}(m,n) :=  \binom{n-1}{m-1}\mathbb{E} W^{n-m} (1-W)^m \quad \mbox{for}~m \le n.
\end{equation}
%%So $\widehat{q}(m, \bullet)$ is the mixture of Pascal $(m,1-W)$ distributions, and the distribution of the $\widehat{Q}$ increment from state $m$ is mixed negative binomial $(m, 1-W)$.
\item
For each $k \ge 1$ the renewal probability $u_k$ defined by \eqref{eq:uk_redef} is given by
\begin{equation}
\label{ukform}
u_k = \mathbb{P}( \widehat{Q}_0 < \widehat{Q}_1 < \cdots < \widehat{Q}_k \,|\, \widehat{Q}_0 = 1 ).
\end{equation}
\item
The sequence $u_k$ is strictly decreasing, with limit $u_\infty \ge 0$ which is given by
\begin{equation}
\label{uinfform}
u_\infty = \mathbb{P}( \widehat{Q}_0 < \widehat{Q}_1 < \cdots \,|\, \widehat{Q}_0 = 1 ).
\end{equation}
\end{enumerate}
\end{lemma}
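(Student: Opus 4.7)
The plan is to exploit the self-similar structure of stick-breaking together with a negative-binomial argument. The key ingredient, valid conditionally on all the stick factors $(W_i)$, is that the $U_i$ are i.i.d.\ uniform, so conditionally on $\{U_i \in [R_{k-1},1)\}$ the point $U_i$ lies in $I_k$ with probability $P_k/(1-R_{k-1}) = W_k$ and in $[R_k,1)$ with probability $1-W_k$, independently across $i$.

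For part (i), I would examine the increment $Q^*_{n(k,m)}(k-j) - Q^*_{n(k,m)}(k-j+1)$, which counts the sample points that fall in $I_{k-j+1}$ among the first $n(k,m)$. Writing $m' := Q^*_{n(k,m)}(k-j+1)$ and $n' := Q^*_{n(k,m)}(k-j)$, the stopping time $n(k,m)$ is by definition the time of the $m$-th landing in $[R_k,1)$; read along the subsequence of samples lying in $[R_{k-j},1)$, this is exactly the $m'$-th landing inside the sub-interval $[R_{k-j+1},1)$, because $[R_k,1)\subset[R_{k-j+1},1)$ and the $n(k,m)$-th sample lies in the former. By the Bernoulli-splitting above, conditionally on $W_{k-j+1}$, $n'$ given $m'$ has a negative-binomial distribution $\P(n'=n\mid m',W_{k-j+1})=\binom{n-1}{m'-1}(1-W_{k-j+1})^{m'}W_{k-j+1}^{n-m'}$. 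Since $W_{k-j+1}$ is independent of $(W_\ell)_{\ell\ne k-j+1}$ and of the sampling data restricted to $[R_{k-j+1},1)$, integrating out $W_{k-j+1}$ yields a transition depending only on $m'$, equal to $\widehat{q}(m',n)$. This simultaneously establishes the Markov property and identifies the kernel.

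For part (ii), I would rewrite the event in \eqref{eq:uk_redef}: since the $n(k,1)$-th sample lies in $[R_k,1)$ and hence outside $I_1\cup\cdots\cup I_k$, the conditions $\{X_1,\ldots,X_{n(k,1)-1}\}=\{1,\ldots,k\}$ and ``each $I_i$, $i=1,\ldots,k$, is hit by some $U_j$ with $j\le n(k,1)$'' coincide. The latter is precisely that every increment $Q^*_{n(k,1)}(k-j)-Q^*_{n(k,1)}(k-j+1)$, $1\le j\le k$, is strictly positive, i.e.\ the process $(Q^*_{n(k,1)}(k-j))_{0\le j\le k}$ is strictly increasing in $j$, which via part (i) matches $\widehat{Q}_0<\widehat{Q}_1<\cdots<\widehat{Q}_k$ started at $\widehat{Q}_0=1$.

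For part (iii), strict decrease of $(u_k)$ comes from $\widehat{q}(m,m)=\E[(1-W)^m]>0$: on the positive-probability event $\{\widehat{Q}_0<\cdots<\widehat{Q}_k\}$ there is positive conditional probability that $\widehat{Q}_{k+1}=\widehat{Q}_k$, so $u_k-u_{k+1}>0$. The limit representation \eqref{uinfform} is then just continuity of probability along the decreasing sequence of events $\{\widehat{Q}_0<\cdots<\widehat{Q}_k\}$. The main obstacle is to make the conditioning in part (i) watertight: upgrading the one-step transition calculation to a genuine Markov property requires careful bookkeeping of the $\sigma$-algebras generated by $(W_\ell)_{\ell\le k-j+1}$ together with the Bernoulli indicators recording whether each sample lies in $I_{k-j+1}$ or in $[R_{k-j+1},1)$, so as to confirm that no information beyond the current count $m'$ enters the next step of the reversed chain.
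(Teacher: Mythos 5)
The paper does not prove this lemma itself; it is quoted verbatim from \cite[Lemma~7.1]{PT17}, so there is no in-paper argument to compare against. Judged on its own terms, your proof takes the natural and (I believe) standard route: conditional Bernoulli splitting at each level, a negative-binomial count for the one-step transition, and integration over the stick factor to get $\widehat q$. Parts~(ii) and~(iii) are handled correctly -- in particular the observation that, since the $n(k,1)$-th sample lies in $[R_k,1)$, ``each $I_i$ is hit before time $n(k,1)$'' is the same as ``each $I_i$ is hit strictly before time $n(k,1)$'', and that strict monotonicity of $(u_k)$ follows from $\widehat q(n,n)=\E(1-W)^n>0$ combined with $u_k>0$.

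The one place I would press you is exactly the place you flag: upgrading the one-step negative-binomial computation to the full Markov property. The statement ``$W_{k-j+1}$ is independent of the sampling data restricted to $[R_{k-j+1},1)$'' is not quite right as written, since $R_{k-j+1}$ itself depends on $W_{k-j+1}$; what is true (and what you need) is that the \emph{rescaled} positions $(U_i-R_{k-j+1})/(1-R_{k-j+1})$ of the samples landing in $[R_{k-j+1},1)$ are i.i.d.\ uniform and jointly independent of $W_{k-j+1}$, and that the quantities $Y_0,\ldots,Y_{j-1}$ (equivalently $m'$ and the deeper counts) and the stopping time $N=n(k,m)$ are measurable with respect to those rescaled positions together with $(W_\ell)_{\ell\ge k-j+2}$. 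Your last sentence refers to ``$\sigma$-algebras generated by $(W_\ell)_{\ell\le k-j+1}$,'' but the relevant past $\sigma$-algebra for the reversed chain at step $j$ is the \emph{deeper} data, $(W_\ell)_{\ell\ge k-j+2}$ and the nested indicators inside $[R_{k-j+1},1)$; conditioning on that full $\sigma$-algebra (not just on $m'$) and then checking that the conditional law of $Y_j$ depends only on $m'$ and $W_{k-j+1}$ is what actually delivers the Markov property. With that index corrected and the independence argument made explicit along the lines you sketch, the proof is sound.
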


\quad Here we study the Markov chain $(\widehat{Q}_k)$ for the GEM$(\theta)$ partition and show that it has an interpretation as a \emph{weak record chain}.
Let $X_1,X_2, \ldots$ be a random sample from the GEM$(\theta)$ model with i.i.d.\ stick-breaking factors $W_i \stackrel{(d)}{=} W$ for $W$ following a beta$(1,\theta)$ distribution.
Consider
\begin{equation}
\label{C1}
    C^{\ell, \theta}_{k}: = \sum_{j=1}^k 1\{Q^{*}_{n(k,\ell)}(j) = Q^{*}_{n(k,\ell)}(j-1)\} \quad \mbox{for } k \ge 1,
\end{equation}
the number of empty intervals among the first $k$ intervals at the first time $n(k,\ell)$ there are $\ell$ points outside the first $k$ intervals. 
To study the random variables $C^{\ell, \theta}_{k}$, we introduce a family of one-parameter Markov chains $(\widehat{Q}_j^{\ell,\theta}, ~j \ge 0)$ with
\begin{itemize}
\item
the initial value $\widehat{Q}_0^{\ell, \theta} = \ell \in \mathbb{N}_{+}$, 
\item
the transition probability function $\widehat{q}^{\theta}(m,n)$ given by \eqref{hatqdef} for $W$ the beta$(1,\theta)$ distribution.
\end{itemize}
For $W$ the beta$(1,\theta)$ distribution, 
\begin{equation*}
    \mathbb{E}W^{n-m}(1-W)^m =  \frac{(1)_{n-m} \, \theta }{(\theta+m)_{n-m-1}} \quad \mbox{for } m \le n,
\end{equation*}
where
\begin{equation*}
    (x)_j: = x(x+1) \cdots (x+j-1) = \frac{\Gamma(x+j)}{\Gamma(x)}.
\end{equation*}
So the transition probability $\widehat{q}^{\theta}$ of the $\widehat{Q}^{\ell,\theta}$ chain is given by 
\begin{equation}
\label{transitheta}
  \widehat{q}^{\theta}(m,n) = \frac{(m)_{n-m} \, \theta }{(\theta+m)_{n-m+1}} \quad \mbox{for } m \le n.
\end{equation}
Let 
\begin{equation}
\label{occup}
G^{\ell,\theta}_{i}(k): = \sum_{j = 1}^k 1\{\widehat{Q}^{\ell,\theta}_{j} = i\} \quad \mbox{for } i \geq \ell,
\end{equation}
be the occupation count of state $i$ for the Markov chain $(\widehat{Q}^{\ell,\theta}_j,~1 \le j \le k)$.
According to Lemma \ref{lemma:PT} $(i)$, for each $k \ge 1$,
\begin{align}
C^{\ell, \theta}_{k}  \ed \widehat{C}^{\ell, \theta}_k &: =  \sum_{j=1}^k 1 \{\widehat{Q}^{\ell,\theta}_j = \widehat{Q}^{\ell,\theta}_{j-1}\}   \label{CQ} \\ 
                              & = \sum_{j=1}^k 1 \{\widehat{Q}^{\ell,\theta}_j = \widehat{Q}^{\ell,\theta}_{j-1} = \ell\} + \sum_{i = \ell+1}^{\infty} \sum_{j=1}^k1 \{\widehat{Q}^{\ell,\theta}_j = \widehat{Q}^{\ell,\theta}_{j-1} = i\} \notag\\
                               &= G^{\ell,\theta}_{\ell}(k) + \sum_{i=\ell+1}^\infty (G^{\ell,\theta}_{i}(k) - 1)^+, \label{C2}
\end{align}
where the last equality follows from the fact that the process $(\widehat{Q}_j^{\ell,\theta}, ~j \ge 0)$ is weakly increasing starting at $\ell$.

\quad Now we establish a connection between the one-parameter chain $\widehat{Q}^{\ell,\theta}$ and a record process. Fix $\ell \in \mathbb{N}_{+}$. For $X_1, X_2, \ldots$ i.i.d.\ with support $\{\ell, \ell+1, \ldots\}$, let $(R_j, ~j \ge 0)$ be the {\em weak ascending record process} of $(X_j, ~j \ge 1)$. That is,
\begin{equation*}
    R_0: = \ell \quad \mbox{and}  \quad R_j : = X_{L_j} \mbox{ for } j \ge 1,
\end{equation*}
where $L_j$ is defined recursively by
\begin{equation*}
    L_1: = 1 \quad \mbox{and} \quad L_{j+1}: = \min\{i> L_j: X_i \ge X_{L_j}\} \mbox{ for } j \ge 1.
\end{equation*}
The sequence $(R_j,~ j \ge 0)$ was first considered by Vervaat \cite{Vervaat}, see also \cite[Section 2.8]{ABN}, and \cite[Lecture 15]{Nevzorov} for further discussion on records of discrete distributions.
It is known that $(R_j,~j \ge 0)$ is a Markov chain with the transition probability function $r(m,n)$ given by
\begin{equation}
\label{rmn}
r(m,n) = \frac{\mathbb{P}(X_1 = n)}{\mathbb{P}(X_1 \ge m)} \quad \mbox{for } m \le n. 
\end{equation}
\begin{proposition}
\label{record}
%%Let $\ell \in \mathbb{N}_{+}$ and $\theta>0$. 
Let $(R_j,~ j \ge 0)$ be the weak ascending record process of the i.i.d.\ sequence $(X_j,~ j \ge 1)$ with $X_j \stackrel{(d)}{=} \widehat{Q}_1^{\ell,\theta}$; that is,
\begin{equation*}
    \mathbb{P}(X_j = n) = \widehat{q}^{\theta}(\ell, n) \quad \mbox{for } n \ge \ell,
\end{equation*}
where $\widehat{q}^{\theta}$ is defined by \eqref{transitheta}.
Then there is the equality in joint distributions
\begin{equation}
(\widehat{Q}_j^{\ell,\theta}, j \ge 0) \stackrel{(d)}{=}  (R_j,~ j \ge 0).
\end{equation}
\end{proposition}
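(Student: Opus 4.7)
The plan is to identify both $(\widehat{Q}_j^{\ell,\theta})_{j\geq 0}$ and $(R_j)_{j\geq 0}$ as Markov chains on $\{\ell,\ell+1,\ldots\}$ with the same initial distribution $\delta_\ell$ and the same one-step transition kernel. By construction $\widehat{Q}_0^{\ell,\theta}=\ell=R_0$, and the Markov property of $R$ follows from the i.i.d.\ assumption on $(X_j)$ together with the strong Markov property applied at each hitting time $L_j$; its transition kernel is given by \eqref{rmn} with $X_1$ having distribution $\widehat{q}^\theta(\ell,\cdot)$. Hence the proof reduces to checking, for all $\ell\le m\le n$, the identity
$$\widehat{q}^\theta(m,n) \;=\; \frac{\widehat{q}^\theta(\ell,n)}{\sum_{k\ge m}\widehat{q}^\theta(\ell,k)}.$$

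To verify this, I rewrite \eqref{transitheta} in Gamma-function form as $\widehat{q}^\theta(m,n) = \theta\,\Gamma(n)\Gamma(m+\theta)/[\Gamma(m)\Gamma(n+\theta+1)]$ via $(x)_j=\Gamma(x+j)/\Gamma(x)$. From this one reads off
$$\frac{\widehat{q}^\theta(\ell,n)}{\widehat{q}^\theta(m,n)} \;=\; \frac{\Gamma(m)\,\Gamma(\ell+\theta)}{\Gamma(\ell)\,\Gamma(m+\theta)},$$
which is manifestly independent of $n$. Consequently the desired identity becomes equivalent to the closed-form tail-sum evaluation
$$\sum_{k=m}^{\infty}\widehat{q}^\theta(\ell,k) \;=\; \frac{\Gamma(m)\,\Gamma(\ell+\theta)}{\Gamma(\ell)\,\Gamma(m+\theta)}.$$

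To prove the tail-sum identity I introduce the sequence $a_j := \Gamma(\ell+\theta)\Gamma(\ell+j)/[\Gamma(\ell)\Gamma(\ell+\theta+j)]$. A short manipulation using $\Gamma(x+1)=x\,\Gamma(x)$ yields $a_j-a_{j+1}=\widehat{q}^\theta(\ell,\ell+j)$, so $a_j$ is the telescoping antiderivative of the summand. The standard asymptotic $\Gamma(\ell+j)/\Gamma(\ell+\theta+j)\sim j^{-\theta}$ shows $a_j\to 0$ as $j\to\infty$ for $\theta>0$, so telescoping from $j=m-\ell$ to infinity gives the sum equal to $a_{m-\ell}$, which is exactly the claimed right-hand side.

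The only genuinely nontrivial step is spotting the antiderivative $a_j$; once it is written down, all the pieces slot together mechanically. A more probabilistic alternative would be to couple $\widehat{Q}^{\ell,\theta}$ and $\widehat{Q}^{m,\theta}$ via the GEM$(\theta)$ discovery construction of Lemma \ref{lemma:PT}, which would give an interpretation of $\sum_{k\ge m}\widehat{q}^\theta(\ell,k)$ as the probability that the first jump from $\ell$ reaches at least $m$; but the direct Gamma-function calculation seems shorter and already completely elementary.
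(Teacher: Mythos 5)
Your proof is correct and follows essentially the same path as the paper's: both reduce the claim to matching the record transition kernel $r(m,n)=\P(X_1=n)/\P(X_1\ge m)$ against $\widehat{q}^\theta(m,n)$, and both hinge on the observation that the ratio $\widehat{q}^\theta(\ell,n)/\widehat{q}^\theta(m,n)$ is independent of $n$. The only (minor) divergence is how the tail sum $\P(X_1\ge m)$ is evaluated: the paper sums the proportionality relation over $n\ge m$ and uses the normalization $\sum_{n\ge m}\widehat{q}^\theta(m,n)=1$, whereas you supply a self-contained telescoping argument with the explicit antiderivative $a_j$ — a touch longer, but it avoids invoking the normalization of $\widehat{q}^\theta(m,\cdot)$ as a black box.
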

\begin{proof}
Observe that for $\ell \le m \le n$,
\begin{equation*}
\widehat{q}^{\theta}(\ell,n) = \frac{(\ell)_{n-\ell} \, \theta }{(\theta+\ell)_{n - \ell + 1}}  = \frac{(\ell)_{m-\ell}}{(\theta+\ell)_{m-\ell}} \widehat{q}^{\theta}(m,n).
\end{equation*}
Sum this identity over $n \ge m$ to see that $\mathbb{P}(X_j \ge m) = (\ell)_{m-\ell}/(\theta+\ell)_{m-\ell}$, hence that $\widehat{q}^{\theta}(m, \cdot)$ is the conditional distribution of $X_j$ given $X_j \ge m$, as required.
\end{proof}

\quad It is known \cite[Theorem 1.1]{PY17} that the counts $G^{\ell,\theta}_i(\infty)$ of records at each possible value $i = \ell, \ell+1, \ldots$ are independent and geometrically distributed on $\mathbb{N}_0: = \{0\} \cup \mathbb{N}_{+}$ with parameter $i/(i + \theta)$. 
Combined with Proposition \ref{record}, we get the following result which is a variant of \cite[Proposition 5.1]{GINR}.
\begin{corollary}
\label{gfC}
Let $C^{\ell,\theta}_k$ and $\widehat{C}^{\ell, \theta}_k$ be defined by \eqref{C1} and \eqref{CQ}. Then there is the increasing and almost sure convergence 
\begin{equation*}
    C_k^{\ell, \theta} \ed \widehat{C}^{\ell, \theta}_k  \uparrow C_{\infty}^{\ell,\theta},
\end{equation*}
along with convergence of all positive moments, where the probability generating function of $C^{\ell,\theta}_{\infty}$ is given by
\begin{equation}
\label{pgfC}
F_{\ell,\theta}(z) :=\mathbb{E} z^{C^{\ell,\theta}_{\infty}} = \frac{\Gamma(\ell+1+\theta) \Gamma(\ell+\theta - \theta z)}{\Gamma(\ell) \Gamma(\ell + 1 + 2 \theta - \theta z)}.
\end{equation}
Consequently, the random variable $C^{\ell,\theta}_{\infty}$ has the mixed Poisson distribution with random parameter $-\theta \log H$, where $H$ has the beta$(\ell,\theta+1)$ distribution.
\end{corollary}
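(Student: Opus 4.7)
The plan is to combine the decomposition \eqref{C2} of $\widehat{C}^{\ell,\theta}_k$ with Proposition \ref{record} and the Pitman--Yakubovich theorem on weak records.

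First, the distributional identity $C^{\ell,\theta}_k \stackrel{(d)}{=} \widehat{C}^{\ell,\theta}_k$ is already recorded in \eqref{CQ}. To obtain the almost sure monotone convergence, I would note that each occupation count $G^{\ell,\theta}_i(k)$ is non-decreasing in $k$, so every summand on the right-hand side of \eqref{C2} is non-decreasing in $k$. Passing to the limit term by term (justified by non-negativity and monotone convergence) then gives $\widehat{C}^{\ell,\theta}_k \uparrow C^{\ell,\theta}_\infty$ a.s., where
\begin{equation*}
C^{\ell,\theta}_\infty := G^{\ell,\theta}_\ell(\infty) + \sum_{i=\ell+1}^\infty \bigl(G^{\ell,\theta}_i(\infty) - 1\bigr)^+ .
\end{equation*}

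For the generating function \eqref{pgfC}, I would invoke Proposition \ref{record} together with \cite[Theorem 1.1]{PY17} to conclude that the family $\{G^{\ell,\theta}_i(\infty) : i \ge \ell\}$ is independent, with $G^{\ell,\theta}_i(\infty)$ geometric on $\mathbb{N}_0$ with parameter $i/(i+\theta)$. The PGF therefore factorises over $i$, and a short direct computation with the geometric distribution yields
\begin{equation*}
\mathbb{E}\!\left[z^{G^{\ell,\theta}_\ell(\infty)}\right] = \frac{\ell}{\ell+\theta-\theta z}, \qquad \mathbb{E}\!\left[z^{(G^{\ell,\theta}_i(\infty)-1)^+}\right] = \frac{i\,(i+2\theta-\theta z)}{(i+\theta)(i+\theta-\theta z)}.
\end{equation*}
The remaining step is to telescope the infinite product of the latter factors over $i \ge \ell+1$. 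Using the identity
\begin{equation*}
\prod_{i=\ell+1}^{N} \frac{i+a}{i+b} = \frac{\Gamma(N+1+a)\,\Gamma(\ell+1+b)}{\Gamma(\ell+1+a)\,\Gamma(N+1+b)}
\end{equation*}
with $(a,b) = (0,\theta)$ and $(a,b) = (2\theta-\theta z, \theta-\theta z)$, together with the asymptotic $\Gamma(N+c)/\Gamma(N+d) \sim N^{c-d}$, the $N$-dependent factors should cancel in the limit $N \to \infty$, and after simplification via $\Gamma(x+1) = x\Gamma(x)$ this produces exactly \eqref{pgfC}.

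Finally, to identify the distribution as mixed Poisson, I would compute the PGF of $\mathrm{Poisson}(\Lambda)$ with random $\Lambda = -\theta \log H$, $H \sim \mathrm{beta}(\ell, \theta+1)$, by conditioning on $H$:
\begin{equation*}
\mathbb{E}\!\left[z^{\mathrm{Poisson}(\Lambda)}\right] = \mathbb{E}\!\left[e^{-\Lambda(1-z)}\right] = \mathbb{E}\!\left[H^{\theta(1-z)}\right] = \frac{\Gamma(\ell+\theta-\theta z)\,\Gamma(\ell+\theta+1)}{\Gamma(\ell)\,\Gamma(\ell+1+2\theta-\theta z)},
\end{equation*}
by the beta moment formula $\mathbb{E}[H^s] = B(\ell+s,\theta+1)/B(\ell,\theta+1)$. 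This matches \eqref{pgfC}, so the two distributions coincide. Convergence of all positive moments is then immediate: $F_{\ell,\theta}$ is analytic in a neighbourhood of $z=1$, hence $C^{\ell,\theta}_\infty$ has finite moments of every positive order, and monotone convergence applied to $(\widehat{C}^{\ell,\theta}_k)^p$ gives $\mathbb{E}[(\widehat{C}^{\ell,\theta}_k)^p] \uparrow \mathbb{E}[(C^{\ell,\theta}_\infty)^p]$. The only moderately technical point is the gamma-function bookkeeping in the telescoped product; all the probabilistic substance is already contained in Proposition \ref{record} and the record-independence theorem cited from \cite{PY17}.
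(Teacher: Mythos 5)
Your proposal is correct and follows essentially the same route as the paper: the decomposition \eqref{C2}, the independence and geometric law of the record occupation counts cited from \cite{PY17}, factorisation of the probability generating function, and identification of the law via the beta Mellin transform with $\nu=\theta(1-z)$. You are somewhat more explicit than the paper about justifying the telescoped infinite product and the convergence of moments (via analyticity of $F_{\ell,\theta}$ near $z=1$ and monotone convergence), but these are just fleshed-out versions of the same argument.
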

This result, combined with Lemma \ref{lemma:PT}$(iii)$, leads to the formula \eqref{PT17formula}:
\[ u_\infty \overset{\theta}{=} \P(C^{1,\theta}_\infty = 0) = F_{1,\theta}(0) = \frac{\Gamma(\theta+2) \Gamma(\theta+1)}{\Gamma(2 \theta+2)}. \]Also note that the random variable $C_{\infty}^{\ell,\theta}$ has a simple representation for $\theta \in \mathbb{N}_{+}$:
\begin{equation}
\label{sumgeo}
C^{\ell,\theta}_{\infty} \stackrel{(d)}{=} \sum_{j = 0}^{\theta} \mathcal{G}_j^{\ell,\theta},
\end{equation}
where $\mathcal{G}_j^{\ell,\theta}$, $0 \le j \le \theta$ are independent and geometrically distributed on $\mathbb{N}_0$ with parameter $(\ell+j)/(\ell+j+\theta)$.

\begin{proof}
The identity \eqref{C2} shows that
\begin{equation*}
     \widehat{C}^{\ell, \theta}_k \uparrow C^{\ell,\theta}_{\infty}: = G^{\ell,\theta}_{\ell}(\infty) + \sum_{i=\ell+1}^\infty (G^{\ell,\theta}_{i}(\infty) - 1)^+ \quad a.s.
\end{equation*}
where $G^{\ell,\theta}_{i}(\infty)$, $i \ge \ell$ are independent and geometrically distributed on $\mathbb{N}_0$ with parameter $p_{i,\theta}: = i/(i + \theta)$. For $G$ geometrically distributed on $\mathbb{N}_0$ with parameter $p$,
\begin{equation*}
    \mathbb{E}z^{G} = \frac{p}{1-(1-p)z} \quad \mbox{and} \quad \mathbb{E}z^{(G-1)^{+}} = p + \frac{(1-p)p}{1-(1-p)z}.
\end{equation*}
As a result,
\begin{align*}
     \mathbb{E} z^{C^{\ell,\theta}_{\infty}} &= \frac{p_{\ell,\theta}}{1-(1-p_{\ell,\theta})z} \prod_{i = \ell + 1}^{\infty} \left(p_{i,\theta} + \frac{(1-p_{i,\theta})p_{i,\theta}}{1-(1-p_{i,\theta})z} \right) \\
    &= \frac{\ell}{\ell+ \theta- \theta z}  \prod_{i = \ell + 1}^{\infty} \frac{i(i+2 \theta - \theta z)}{(i+ \theta)(i+\theta-\theta z)} \\
    & = \frac{\ell}{\ell+ \theta- \theta z} \cdot \frac{\Gamma(\ell+1+\theta) \Gamma(\ell+1+\theta - \theta z)}{\Gamma(\ell+1) \Gamma(\ell + 1 + 2 \theta - \theta z)},
\end{align*}
which leads to the formula \eqref{pgfC}. 
Recall that the generating function of the Poisson$(u)$ distribution is $e^{-u(1-z)}$, and that the Mellin transform of the beta$(p,q)$ variable $H_{p,q}$ is 
\begin{equation*}
\mathbb{E}H_{p,q}^\nu = \frac{\Gamma(\nu+p)\Gamma(p+q)}{\Gamma(p) \Gamma(\nu+p+q)} \quad \mbox{for } \nu > -p.
\end{equation*}
By taking $\nu = \theta(1-z)$, $p = \ell$ and $q = \theta+1$, we identify the distribution of $C^{\ell,\theta}_{\infty}$ with the stated mixed Poisson distribution.
\end{proof}

\quad Let $\psi(x): = \Gamma'(x)/ \Gamma(x)$ be the digamma function, and $\psi^{(k)}(x)$ be the $k^{th}$ derivative of $\psi(x)$. For $k \ge 1$, define
\begin{equation}
\label{Deltakz}
\Delta_{k,\ell,\theta}(z) := \psi^{(k-1)}(\ell+\theta-\theta z) - \psi^{(k-1)}(\ell+1+2\theta-\theta z).
\end{equation}
A simple calculation shows that $F'_{\ell,\theta}(z) = -\theta F_{\ell,\theta}(z) \Delta_{1,\ell,\theta}(z)$ and $\Delta'_{k,\ell,\theta}(z) = -\theta \Delta_{k+1,\ell,\theta}(z)$.
By induction, the derivatives of $F_{\theta}$ can be written as
\begin{equation}
F_{\ell,\theta}^{(k)}(z) = (-\theta)^k F_{\ell,\theta}(z) P_k(\Delta_{1,\ell,\theta}(z), \cdots, \Delta_{k,\ell,\theta}(z)),
\end{equation}
where $P_k(x_1, \cdots, x_k)$ is the {\em $k^{th}$ complete Bell polynomial} \cite[Section 3.3]{Comtet74}.
To illustrate,
\begin{align*}
P_1(x_1) &= x_1,\\
P_2(x_1,x_2) &= x_1^2 + x_2,\\
P_3(x_1,x_2,x_3) &= x_1^3 + 3 x_1 x_2 + x_3,\\
P_4(x_1,x_2,x_3,x_4) &= x_1^4 + 6 x_1^2 x_2 + 4 x_1 x_3 + 3 x_2^2 +x_4,\\
P_5(x_1,x_2,x_3,x_4,x_5) &= x_1^5 + 10 x_1^3 x_2 + 10 x_1^2 x_3 + 15 x_1 x_2^2 + 5 x_1 x_4 + 10 x_2 x_3 + x_5,
\end{align*}
and so on. 
Now by expanding $F_{\ell,\theta}$ into power series at $z=0$ and $z=1$, we get
\begin{equation}
\label{Claw}
\mathbb{P}(C^{\ell,\theta}_\infty = k) = \frac{(-\theta)^k}{k !} \frac{\Gamma(\ell+\theta) \Gamma(\ell+ \theta+1)}{\Gamma(\ell) \Gamma(\ell+ 2 \theta + 1)} P_k(\Delta_{1,\ell,\theta}(0), \cdots, \Delta_{k,\ell,\theta}(0)),
\end{equation}
and
\begin{equation}
\label{binom}
\mathbb{E}\binom{C_\infty^{\ell,\theta}}{k} = \frac{(-\theta)^k}{k !} \, P_k(\Delta_{1,\ell,\theta}(1), \cdots, \Delta_{k,\ell,\theta}(1)),
\end{equation}
where $\Delta_{k,\ell,\theta}(\cdot)$ is defined by \eqref{Deltakz}.
By taking $\theta = 1$ and $k=1$ in \eqref{binom}, we get
\begin{equation}
\mathbb{E}C_{\infty}^{\ell,1} = \psi(\ell+2) - \psi(\ell) = \frac{1+ 2 \ell}{\ell(\ell+1)},
\end{equation}
since $\psi(\ell) = \sum_{j=1}^{\ell-1} 1/j - \gamma$, with $\gamma$ the Euler constant. 

%-------------------------------------------------------------------------------------------------
\section{Proof of Proposition \ref{prop:conju}}
\label{sec:prop_proof}

\quad In this section we apply the results of Section \ref{sec:gem_theta_records} to evaluate the renewal sequence $(u_k)$ in the GEM$(1)$ case, and extend to the general GEM$(\theta)$ case. 
The computation boils down to the study of the Markov chain $(\widehat{Q}^{\ell,\theta}_k,~k \ge 0)$ with $\ell = 1$. 
We start by proving Proposition \ref{prop:conju}, corresponding to the case where $\ell = 1$ and $\theta = 1$.
To this end, we need the following duality formula due to Hoffman \cite[Theorem 4.4]{Hoffman} and Zagier \cite[Section 9]{Zagier}.

\begin{lemma}
Let $\zeta(s_1, \cdots, s_k)$ be the multiple zeta value defined by \eqref{multzeta}. Then
\begin{equation}
\zeta(\underbrace{1,\ldots,1}_{k-1}, h+1) = \zeta(\underbrace{1,\ldots,1}_{h-1}, k+1) \quad \mbox{for all } h, k \in \mathbb{N}_{+}.
\end{equation}
In particular, 
\begin{equation}
\label{keyzeta}
\zeta(\underbrace{1,\ldots,1}_{k-2}, 2) = \zeta(k) \quad \mbox{for all } k \geq 2.
\end{equation}
\end{lemma}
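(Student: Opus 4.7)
The plan is to prove the general duality via Kontsevich's iterated integral representation of multiple zeta values, from which the identity \eqref{keyzeta} follows by specialization. The key classical fact I would invoke is that for any composition $(s_1, \ldots, s_k)$ with $s_k \ge 2$ and weight $w := s_1 + \cdots + s_k$,
\[
\zeta(s_1, \ldots, s_k) \;=\; \int_{0 < t_1 < \cdots < t_w < 1} \omega_{\epsilon_1}(t_1) \cdots \omega_{\epsilon_w}(t_w),
\]
where $\omega_0(t) := dt/t$, $\omega_1(t) := dt/(1-t)$, and the binary word $\epsilon_1 \cdots \epsilon_w$ is the concatenation of the $k$ blocks $1\, 0^{s_j - 1}$ for $j = 1, \ldots, k$. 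I would first verify this by expanding each occurrence of $\omega_1$ as the geometric series $\sum_{n \ge 1} t^{n-1}\, dt$, integrating the intervening $\omega_0$'s to produce factors of $1/n^{s_j}$, and matching the resulting iterated sum with the defining series \eqref{multzeta}.

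The duality itself then reduces to the involution $(t_1, \ldots, t_w) \mapsto (1 - t_w, \ldots, 1 - t_1)$ on the simplex $\{0 < t_1 < \cdots < t_w < 1\}$. This is an order-reversing diffeomorphism of the simplex onto itself whose Jacobian has absolute value $1$, and whose pull-back exchanges $\omega_0$ and $\omega_1$. It follows that $\zeta$ takes the same value on any two compositions whose associated binary words are related by a simultaneous reversal and swap of $0$'s and $1$'s.

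It remains to verify that the two compositions in the lemma are dual in this sense. The word associated with $(\underbrace{1, \ldots, 1}_{k-1}, h+1)$ is $\underbrace{1 \cdots 1}_{k}\,\underbrace{0 \cdots 0}_{h}$: the first $k-1$ entries each contribute a solitary $1$, and the final entry $h+1$ contributes a $1$ followed by $h$ zeros. Reversing this string gives $0^h 1^k$, and swapping $0 \leftrightarrow 1$ gives $1^h 0^k$, which is exactly the word associated with $(\underbrace{1, \ldots, 1}_{h-1}, k+1)$. This proves the general identity; the special case \eqref{keyzeta} then follows by taking $h = 1$ and replacing $k$ by $k-1$.

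The main obstacle is really just combinatorial bookkeeping: one must fix a single consistent convention relating compositions, binary words, and the orientation of the simplex, so that the substitution $t \mapsto 1 - t$ truly implements the claimed reverse-and-swap operation. Once the conventions are in place, the analytic content amounts to a one-line change of variables inside the iterated integral.
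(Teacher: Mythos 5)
Your proof is correct and complete. Note, however, that the paper does not actually prove this lemma: it is stated as a known duality theorem and attributed to Hoffman and Zagier, with citations to their original papers. The argument you give — Kontsevich's iterated-integral representation over the simplex $\{0 < t_1 < \cdots < t_w < 1\}$, followed by the involution $(t_1,\ldots,t_w)\mapsto(1-t_w,\ldots,1-t_1)$, which preserves the simplex and exchanges $\omega_0 = dt/t$ and $\omega_1 = dt/(1-t)$ — is precisely Zagier's proof, so you have supplied the standard argument where the paper only supplies a reference. Your bookkeeping is accurate: the composition $(\underbrace{1,\ldots,1}_{k-1},h+1)$ encodes to the word $1^k 0^h$, the composition $(\underbrace{1,\ldots,1}_{h-1},k+1)$ to $1^h 0^k$, and reverse-and-swap interchanges them; taking $h=1$ and replacing $k$ by $k-1$ yields \eqref{keyzeta}. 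One small remark on conventions: the paper's definition \eqref{multzeta} states the convergence condition as $s_1>1$, but with the index set $0<n_1<\cdots<n_k$ it is in fact $s_k\ge 2$ that is required (the paper's lemma and all its uses are consistent with this), and that is the convention under which your iterated-integral formula, with word ending in $0$, is valid.
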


\begin{proof}[Proof of Proposition \ref{prop:conju}]
By Lemma \ref{lemma:PT}$(ii)$, for $k \ge 2$,
\begin{align*}
u_k & \stackrel{1}{=} \sum_{1<n_1< \cdots < n_k} \mathbb{P}(\widehat{Q}^{1,1}_1 = n_1, \cdots, \widehat{Q}^{1,1}_k = n_k) \\
& = \sum_{1<n_1< \cdots < n_k} \widehat{q}^{1}(1,n_1) \, \widehat{q}^{1}(n_1,n_2) \cdots \widehat{q}^{1}(n_{k-1},n_k) \\
& = \sum_{1<n_1< \cdots < n_{k-1}} \frac{1}{(n_1+1) \cdots(n_{k-2}+1)(n_{k-1}+1)^2} \\
& = \sum_{0 <n_1< \cdots < n_{k-1}} \frac{1}{(n_1+2) \cdots(n_{k-2}+2)(n_{k-1}+2)^2}.
\end{align*}
For $k \ge 2$ and $x \ge 0$, let
\begin{equation}
\label{Hzeta}
\zeta(\nu_1, \ldots, \nu_{k-1}; x) : =  \sum_{0 <n_1< \cdots < n_{k-1}} \frac{1}{(n_1+x)^{\nu_1} \cdots(n_{k-2}+x)^{\nu_{k-2}}(n_{k-1}+x)^{\nu_{k-1}}},
\end{equation}
be the {\em multiple Hurwitz zeta function} \cite{Olivier}, and $h_k(x): = \zeta(\underbrace{1,\ldots,1}_{k-2},2;x)$. Therefore
\begin{equation}
\label{ukhk}
u_k \stackrel{1}{=} h_k(2) \mbox{ for } k \ge 2.
\end{equation}
We claim that for $k \ge 3$,
\begin{equation}
\label{Hzetarel}
\zeta(\nu_1, \ldots, \nu_{k-1}; x-1) = \zeta(\nu_1, \ldots, \nu_{k-1}; x) + x^{-\nu_{1}} \zeta(\nu_2, \ldots, \nu_{k-1};x)
\end{equation}
In fact,
\begin{align*}
    \zeta(\nu_1, \ldots, \nu_{k-1}; x-1) & = \sum_{0 <n_1< \cdots < n_{k-1}} \frac{1}{(n_1+x-1)^{\nu_1} \cdots(n_{k-2}+x-1)^{\nu_{k-2}}(n_{k-1}+x-1)^{\nu_{k-1}}} \\
     & = \sum_{0  \leq n_1< \cdots < n_{k-1}} \frac{1}{(n_1+x)^{\nu_1} \cdots(n_{k-2}+x)^{\nu_{k-2}}(n_{k-1}+x)^{\nu_{k-1}}}, %\\
    % & = \sum_{0 = n_1 < n_2 < \cdots < n_{k-1}} \cdots + \sum_{0 < n_1 < \cdots < n_{k-1}} \cdots,
\end{align*}
and writing this expression as two sums over the distinct sets $\{0 = n_1 < n_2 < \cdots < n_{k-1}\}$ and $\{0 < n_1 < n_2 < \cdots < n_{k-1}\}$ yields the formula \eqref{Hzetarel}. Consequently,
\begin{equation}
\label{recur}
h_k(x-1) = h_k(x) +  x^{-1} h_{k-1}(x) \quad  \mbox{for } k \ge 3 .
\end{equation}
By taking $x = 2$ and $x = 1$ in \eqref{recur}, we get for $k \ge 3$,
\begin{equation*}
h_k(1) = h_k(2) +  \frac{1}{2} h_{k-1}(2) \quad \mbox{and} \quad h_k(0) = h_k(1) +  h_{k-1}(1),
\end{equation*}
which implies that for $k \ge 4$,
\begin{equation}
\label{recur2}
h_k(0) = h_k(2) + \frac{3}{2} h_{k-1}(2) + \frac{1}{2} h_{k-2}(2).
\end{equation}
According to the formula \eqref{keyzeta},
\begin{equation}
\label{multzetafor} % Consequence of Theorem 4.4.
h_k(0) = \zeta(\underbrace{1,\ldots,1}_{k-2}, 2) = \zeta(k).
\end{equation}
By \eqref{ukhk}, \eqref{recur2} and \eqref{multzetafor}, we derive the recursion \eqref{rec} for $k \ge 4$.
Recall that by definition, we have $u_0=1$ and it is easy to check that $u_1\overset{1}{=}1/2$.
By symbolic integration, we get:
\begin{equation*}
u_2 \overset{1}{=} -\frac{5}{4} + \zeta(2) \quad \mbox{and} \quad u_3 \overset{1}{=} \frac{13}{8} - \frac{3}{2} \zeta(2) + \zeta(3),
\end{equation*}
which satisfies the recursion for $k = 2,3$.
So the part $(i)$ of the proposition is proved.
The equivalences $(i) \Leftrightarrow (ii) \Leftrightarrow (iv)$ are straightforward, and $(ii) \Leftrightarrow (iii)$ follow by partial fraction decomposition.
We will see in Section \ref{sec:renewal_seq} that the parts $(i)$, $(iv)$ in Proposition \ref{prop:conju} are valid for general recursions of the form $au_{k-2}+bu_{k-1}+cu_k = \zeta(k)$.
\end{proof}

\quad In the sequel, we aim to extend the above calculation to general $\theta>0$. It is easily seen that
\begin{align*}
    u_k & \stackrel{\theta}{=} \sum_{1<n_1< \cdots < n_k} \mathbb{P}(\widehat{Q}^{1,\theta}_1 = n_1, \cdots, \widehat{Q}^{1,\theta}_k = n_k) \\
        & = \sum_{1<n_1< \cdots < n_k} \frac{\theta^k \, (n_k-1)!}{(\theta+ n_1) \cdots (\theta+n_{k-1}) (\theta + 1)_{n_k}} \\ 
        & = \theta^k \, \sum_{0 <n_1< \cdots < n_{k-1}} \frac{1}{(\theta+ n_1+1) \cdots (\theta+n_{k-1}+1)} \sum_{n_k > n_{k-1}} \frac{n_k!}{(\theta+1)_{n_k+1}}.
\end{align*}
Note that for all $k \ge 0$,
\begin{align*}
    \sum_{n \ge k} \frac{n!}{(\theta+1)_{n+1}} &= \sum_{n \ge k} \frac{1}{\theta} \frac{(\theta + n+1)n!-(n+1)!}{(\theta+1)_{n+1}}  \\
    &= \sum_{n \ge k} \frac{1}{\theta}\left (\frac{n!}{(\theta+1)_{n}} - \frac{(n+1)!}{(\theta+1)_{n+1}} \right ) \\
    &= \frac{k!}{\theta\,(\theta+1)_{k}} = \frac{\Gamma(\theta) \, \Gamma(k+1)}{\Gamma(\theta+k+1)}.
\end{align*}
Therefore,
\begin{equation}
\label{eq:uktheta}
u_k \stackrel{\theta}{=} \theta^k \Gamma(\theta) \sum_{0 <n_1< \cdots < n_{k-1}} \frac{1}{(\theta+n_1+1) \cdots (\theta+n_{k-1}+1)}\, \frac{\Gamma(n_{k-1}+2)}{\Gamma(n_{k-1}+\theta+2)}.
\end{equation}
It seems to be difficult to simplify the expression \eqref{eq:uktheta} for general $\theta$. We focus on the case where $\theta \in \mathbb{N}_{+}$. Let
\begin{equation*}
    h_{k,\theta}(x): = \theta^k \Gamma(\theta)\sum_{0 <n_1< \cdots < n_{k-1}} \frac{\Gamma(n_{k-1}+1-\theta + x)}{(n_1+x) \cdots(n_{k-1}+x) \Gamma(n_{k-1}+1+x)},
\end{equation*}
so $u_k \stackrel{\theta}{=} h_{k,\theta}(\theta+1)$.
Again it is elementary to show that
\begin{equation*}
    h_{k,\theta}(x-1) = h_{k,\theta}(x) + \frac{\theta}{x} h_{k-1,\theta}(x).
\end{equation*}
Consequently, the sequence $(u_k,~ k \ge 0)$ satisfies a $(\theta+1)$-order recursion:

 \begin{equation}
 u_k + a_{1,\theta} u_{k-1} + \cdots + a_{\theta+1,\theta} u_{k-\theta-1} = h_{k,\theta}(0),
 \end{equation}
where
\begin{equation*}
a_{i,\theta}: = \sum_{0<n_1 < \cdots < n_i \le \theta+1} \frac{\theta^i}{n_1 \cdots n_i} \quad \mbox{for } 1 \le i
\le \theta+1,
\end{equation*}
and $h_{k,\theta}(0)$ is a variant of the multiple Hurwitz zeta function.
%----------------------------------------------------------------------------
%-------------------------------------------------------------------------------------------------
\section{Renewal sequences derived from the zeta function}
\label{sec:renewal_seq}

\quad Look at the sequence 
\begin{equation}
\label{ukdef}
u_k := \sum_{n = 1}^\infty \frac{n^{-k}}{q(n)}   \quad ( k = 0,1, 2, \ldots)
\end{equation}
where 
\begin{equation}
\label{qdef}
q(n) := a n^2 + b n  + c
\end{equation}
is a generic quadratic function of $n$. We are interested in conditions on $q$ which allow
the sequence $(u_k, k = 0,1, \ldots)$ to be interpreted as a renewal sequence \cite{Feller}.
Basic requirements are that $q(n) > 0 $ for all $n = 1,2, \ldots$, so at least $a > 0$, and that $u_0 = 1$, which is
a matter of normalization of coefficients of $q$. The sequence $1/q(n)$, $n = 1,2, \ldots$ then defines a probability
distribution on the positive integers. If $X$ denotes a random variable with this distribution, so $\P(X= n) = 1/q(n)$,
$n = 1,2, \ldots$,
then  \eqref{ukdef} becomes
\begin{equation}
\label{ukmom}
u_k = \E (1/X)^k  \quad ( k = 0,1, 2, \ldots).
\end{equation}
That is to say, $u_k$ is the $k^{th}$ moment of the probability distribution of $1/X$ on $[0,1]$.
Obviously, $0 \le u_k \le 1$, and by the Cauchy-Schwartz inequality applied to 
$$(1/X)^k = (1/X)^{(k-1)/2} (1/X)^{(k+ 1)/2},$$
\begin{equation}
\label{kaluza}
u_k ^2 \le u_{k-1} u_{k+1}  \qquad ( k = 1,2, \ldots).
\end{equation}
A  sequence $(u_k)$ bounded between $0$ and $1$ with $u_0 = 1$ and subject to \eqref{kaluza} is called a {\em Kaluza sequence} \cite{Kaluza}.
By a classical theorem of Kaluza, every such sequence is a {\em renewal sequence} \cite{Kaluza}.
See \cite{PT17} for an elementary proof and further references.
In view of Proposition $1.2$, we are motivated to study such renewal sequences $(u_k)$ and
the associated distribution $(f_k)$ of the time until first renewal, whose generating functions
\begin{equation}
\label{gfs}
U(z):= \sum_{k=0}^\infty u_k z^k \mbox{ and } F(z):= \sum_{k=1}^\infty f_k z^k  \qquad ( | z | < 1 )
\end{equation}
are known \cite{Feller} to be related by
\begin{equation}
\label{gfrel}
U(z)= (1- F(z))^{-1} \mbox{   and } F(z)  = 1 - U(z)^{-1} .
\end{equation}
This identity of generating functions corresponds to the basic relation 
\begin{equation}
\label{ufrec}
u_k = f_k + f_{k-1} u_1 + \cdots + f_1 u_{k-1}  \qquad ( k = 1, 2, \ldots)
\end{equation}
which allows either of the sequences $(u_k)$ and $(f_k)$ to be derived from the other.
Observe that the definition $q(n) = a n^2 + b n + c$ gives
\begin{equation}
\label{basicid}
\frac{c n^{-k}}{q(n)} + \frac{b n^{-(k-1)}}{q(n)} + \frac{a n^{-(k-2)}}{q(n)} = n^{-k}
\end{equation}
and hence, for $k \ge 2$,
\begin{equation}
\label{urec}
c u_k + b u_{k-1} + a u_{k-2} = \sum_{n=1}^\infty n^{-k} = \zeta(k).
\end{equation}
It follows that $U(z)$ and hence $F(z)$ can always be expressed in terms of the well known (see \cite[formula 6.3.14]{abramowitz1964handbook}) generating
function of $\zeta$ values
\begin{equation}
G(z):= \sum_{n=2}^\infty \zeta(n) z^n = - z ( \gamma + \psi(1-z) ) , \qquad ( | z | < 1 )
\end{equation}
where $\gamma$ is Euler's constant and $\psi(x):= \Gamma'(x)/\Gamma(x)$ is the digamma function,
as
$$
c( U(z) - u_0 - u_1 z ) + b z ( U(z) - u_0 ) + a z^2 U(z) = G(z)
$$
or 
$$
q(z) U(z) - c ( u_0 + u_1 z ) - u_0 b z  = G(z)
$$
which rearranges as
\begin{equation}
\label{ufromG}
U(z) = \frac{ c u_0 + ( b u_0 + c u_1 ) z + G(z) }{ q(z) }.
\end{equation}

Defining $r_1,r_2\in \mathbb{C}$ as the two roots of $q$, we have
\begin{gather*}
q(z) = a(z-r_1)(z-r_2)\\
b = -a(r_1+r_2) \text{ and } c = a r_1 r_2.
\end{gather*}
Note that our assumption that $q(n)>0$ for all $n=1,2,\ldots$ implies that the roots $r_1$ and $r_2$ are not positive integers.
A straight-forward computation shows that the condition $u_0=1$ implies
\[ a = \begin{cases}
\dfrac{\psi(1-r_2)-\psi(1-r_1)}{r_1 - r_2} &\quad \text{if } r_1 \neq r_2\\
\psi'(1-r_1) &\quad \text{if }r_1 = r_2,
\end{cases} \]
and that we have
\[ u_1= \frac{1}{2c}\left (-b+2\gamma+\psi(1-r_1)+\psi(1-r_2)\right ). \]
Finally, obtaining $F(z)$ from \eqref{gfrel} and \eqref{ufromG} and taking derivatives gives us
\begin{align}
&F'(1)=q(1),\\
&\begin{aligned}
F''(1)&+q(1)(1-q(1)) \\
&= (4-q(1))a-q(-1)+q(1)(c(1+2u_1)+b)\\
&= (4-q(1))a-q(-1)+q(1)(c+2\gamma+\psi(1-r_1)+\psi(1-r_2)).
\end{aligned}
\end{align}

To summarize, and combine with some standard renewal theory:
\begin{proposition} \label{prop:ukfromq}
	Let $q(n)$ be any quadratic function of $n = 1,2, \ldots$ with $q(n) > 0$ for all $n$, normalized so that
	$
	u_0:= \sum_{n = 1}^\infty 1/q(n) = 1,
	$
	and let $u_k:= \sum_{n = 1}^\infty n^{-k}/ q(n)$ for $k \ge 1$. Then $(u_k)$ is a decreasing, positive recurrent renewal sequence, with
	$$
	\lim_{k \to \infty} u_k = 1/q(1) .
	$$
	The corresponding distribution of an i.i.d.\ sequence of positive integer valued random variables $Y_1, Y_2, \ldots$ with
	$\P(Y_1 + \ldots + Y_m = k \mbox{ for some m } ) = u_k$ has distribution with mean and variance
	\begin{gather*}
    \E (Y_1) = q(1)\\
    \Var(Y_1) = (4-q(1))a-q(-1)+q(1)(c+2\gamma+\psi(1-r_1)+\psi(1-r_2))
    \end{gather*}
	and probability generating function $F(z):= \E (z^{Y_1} )$ given by \eqref{gfrel} for $U(z)$ as in \eqref{ufromG}.
\end{proposition}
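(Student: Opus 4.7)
The unifying observation is the moment representation $u_k = \E[(1/X)^k]$, where $X$ is the $\mathbb{N}_{+}$-valued random variable with $\P(X = n) = 1/q(n)$ (a probability measure thanks to the normalization $u_0 = 1$). Since $1/X \in (0,1]$, the sequence $(u_k)$ is immediately monotone decreasing and bounded above by $1$, and the Cauchy--Schwarz inequality applied to $(1/X)^k = (1/X)^{(k-1)/2}(1/X)^{(k+1)/2}$ gives the Kaluza log-convexity \eqref{kaluza}. By Kaluza's theorem (recalled in the text), this makes $(u_k)$ a renewal sequence.

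Dominated convergence applied to $u_k = \sum_{n \geq 1} n^{-k}/q(n)$ isolates the $n=1$ contribution as $k \to \infty$, yielding $u_\infty = 1/q(1) > 0$. The discrete renewal theorem then delivers positive recurrence of the corresponding inter-renewal distribution $(f_k)$ and pins down $\E(Y_1) = 1/u_\infty = q(1)$. The generating-function identity \eqref{ufromG} is exactly the derivation already given in the text: the quadratic identity \eqref{basicid} is divided by $q(n)$ and summed over $n \geq 1$ to produce the recursion \eqref{urec}, and this is packaged as a relation for $U(z)$ via the known series $G(z) = \sum_{n \geq 2}\zeta(n) z^n = -z(\gamma + \psi(1-z))$.

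What remains is the variance computation, for which I would use $\Var(Y_1) = F''(1) + F'(1) - F'(1)^2$ together with $F(z) = 1 - q(z)/H(z)$, writing $H(z) := c u_0 + (b u_0 + c u_1) z + G(z)$ for the numerator in \eqref{ufromG}. Setting $\epsilon = 1 - z$, the Laurent expansion $\psi(s) = -1/s - \gamma + \zeta(2) s + O(s^2)$ at $s = 0$ gives $G(z) = 1/\epsilon - 1 - \zeta(2)\epsilon + O(\epsilon^2)$, so $H$ has a simple pole at $z = 1$ with residue $-1$. A Taylor expansion of $q(z)/H(z)$ in $\epsilon$ then reads off $F'(1) = q(1)$ from the $\epsilon^1$ coefficient and $F''(1)$ from the $\epsilon^2$ coefficient. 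The main obstacle is the final algebraic simplification: substituting the closed form for $u_1$ and using $q(1) = a+b+c$, $q'(1) = 2a+b$, and $q(-1) = a-b+c$ to rewrite the raw answer in the stated form involving $\gamma$, $\psi(1-r_1)$, and $\psi(1-r_2)$ is entirely mechanical but requires some bookkeeping; no additional probabilistic or analytic input is needed beyond the Laurent expansion of the digamma function.
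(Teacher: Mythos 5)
Your proposal is correct and follows essentially the same path as the paper: the moment representation $u_k = \E[(1/X)^k]$, Cauchy--Schwarz for the Kaluza property, Kaluza's theorem for the renewal structure, dominated convergence for $u_\infty = 1/q(1)$, and differentiation of $F$ at $z=1$ for the mean and variance. The Laurent expansion $\psi(\epsilon) = -1/\epsilon - \gamma + \zeta(2)\epsilon + O(\epsilon^2)$ that you use to carry out the differentiation is the natural way to make explicit the step the paper passes over with ``taking derivatives,'' and your identification of the simple pole of $H$ at $z=1$ with residue $-1$ is correct.
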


{\bf Example.}
Take $a = 1/2, b = 3/2, c = 1$. Then $q(n) = (n+1)(n+2)/2$ makes $u_0 = 1$, $1/u_\infty = \E(Y_1) = q(1) = 3$, and $\Var(Y_1) = 11$.
From Proposition \ref{prop:conju}, we know that this sequence $(u_k)$ is the renewal sequence associated with a GEM$(1)$ random partition of $[0,1]$.
Equivalently in the terminology of random permutations \cite{PT17}, $(u_k)$ is the renewal sequence of the \emph{splitting times} of a GEM$(1)$-biased permutation.
Therefore $Y_1$ is distributed as $T_1$, the first splitting time of $\Pi$ a GEM$(1)$-biased permutation.
In particular, we have $\E(T_1) = 3$, $\Var(T_1) = 11$.
%-------------------------------------------------------------------------------------------------
\section{Development of the GEM\texorpdfstring{$(1)$}{(1)} case}
\label{sec:gem_one}

\quad The Markov chain $(\widehat{Q}_k)$ described in Lemma \ref{lemma:PT}, with uniform stick-breaking factors -- i.e.\ in the GEM$(1)$ case -- was first studied by Erdős, Rényi and Szüsz \cite{ERS}, where it appears as Engel's series derived from $U$ a uniform random variable on $(0,1)$.
More precisely, if $2 \leq q_1 \leq q_2 \leq \ldots$ is the unique random sequence of integers such that 
\[ U = \frac{1}{q_1}+\frac{1}{q_1 q_2} +\cdots+\frac{1}{q_1 q_2 \cdots q_n} +\cdots, \]
then we have 
\[ (q_i - 1,\, i\geq 1) \ed (\widehat{Q}_i,\, i\geq 1), \]
where we condition on $\widehat{Q}_0 = 1$.

\quad Here we give explicit formulas for the distribution of $\widehat{Q}_k$ in terms of iterated harmonic sums and the Riemann zeta function.
The transition probabilities $\widehat{q}(m,n) := \P(\widehat{Q}_{k+1} = n \mid \widehat{Q}_k = m)$ are given by
\[ \widehat{q}(m,n) = \frac{m}{n(n+1)}. \]
Then the joint probability distribution of $\widehat{Q}_1, \ldots , \widehat{Q}_k$ is given by the formula
\begin{align}
\P( \widehat{Q}_{1} = n_1, \ldots, \widehat{Q}_{k-1} = n_{k-1}, \widehat{Q}_k = n_k) = \frac {1(1\le n_1 \le \ldots \le n_{k-1} \le n_k)}{(n_1 +1)\cdots (n_{k-1}+1) (n_k+1) n_k }  .
\label{vist}
\end{align}
It follows that for $k = 1,2, \ldots$
\begin{align}
\P(\widehat{Q}_k = n) &= \frac{1}{n(n+1)} \sum_{1\leq n_1\leq \ldots \leq n_{k-1}\leq n} \frac{1}{(n_1+1)\cdots (n_{k-1}+1)}\notag \\
& =\frac{\SH_{k-1}(n+1) - \SH_{k-2}(n+1)}{n(n+1)}. \label{vkdist}
\end{align}
%
%In particular,
%\begin{align}
%\P( V_{1} = n ) &= \frac {1}{n (n-1)}  \quad (2\le n),
%\label{v1dist} \\
%\P( V_{1} = m , V_2 = n) &= \frac {1}{ m n (n-1)}
%\quad (2\le m \le n).
%\label{v12dist}
%\end{align}
%It follows that
%\begin{align}
%\label{v2ist}
%\P( V_{2} = n ) = \frac{1}{ n (n-1) } \sum_{m=2}^n \frac{1}{m} = \frac{ \Hn - 1}{n(n-1) }
%\end{align}
%where
%\begin{align}
%\Hn:= \sum_{m=1}^n \frac{1}{m} 
%\end{align}
%is the $n$th {\em harmonic number}.
%Note the consequence of \eqref{v2ist} that
%\begin{align}
%\sum_{n=2}^ \infty \frac{ \Hn - 1}{n(n-1)}  = 1
%\end{align}
%and hence easily
%\begin{align}
%\sum_{n=2}^ \infty \frac{ \Hn}{n(n-1)}  =  2
%\label{hsum1}
%\end{align}
%Repeating this calculation, it follows by induction on $k$ that 
%the probability distribution of each $V_k$ is determined by the formula
%\begin{align}
%\label{vkdist}
%\P( V_k = n) = \frac{  \SH_{k-1}(n) - \SH_{k-2}(n) } { n(n-1) } \quad (k \ge 1, n \ge 2)
%\end{align}

where $\SH_{-1}(n) = 0, \SH_{0}(n) = 1$, and $\SH_k(n)$ for $k = 1, 2, \ldots$ is the $k$th {\em iterated harmonic sum} defined by 
\begin{align}
\label{SH}
\SH_k(n):= \sum_{m=1}^n \frac{ \SH_{k-1}(m)  }{m} 
= \sum_{1 \le n_1 \le n_2 \le \cdots \le n_k \le n} \frac{ 1 }{ n_1 n_2 \cdots n_k } .
\end{align}
In particular, $\SH_1(n) = \Hn$ is the $n$-th harmonic number, and $\SH_2(n) = \sum_{m=1}^n H(m)/m$. Such {\em iterated} or {\em multiple} harmonic
sums have attracted the attention of a number of authors \cite{BBG,AK,AKO}.  Since \eqref{vkdist} describes a
probability distribution over $n \in \{1,2,3,\ldots\}$, we deduce by induction that
\begin{align}
\sum_{n=1}^ \infty \frac{ \SH_k(n+1)}{n(n+1)}  =  k + 1      \quad (k  \ge 0 ).
\label{hsumk}
\end{align}
This identity has the probabilistic interpretation that for each $k = 1,2, \ldots$, $n = 1,2, \ldots$
\begin{align}
\label{SHinterp}
\E \left[ \sum_{j=1}^k 1( \widehat{Q}_j = n) \right] = \sum_{j=1}^k \P( \widehat{Q}_j = n)  = \frac{ \SH_{k-1} (n+1) } { n (n+1) },
\end{align}
where $\sum_{j=1}^k 1( \widehat{Q}_j = n)$ is the number of times $j$ with $1 \le j \le k$ that $\widehat{Q}_j$ has value $n$.
It is easily seen that for each fixed $n$ the sequence $\SH_{k-1} (n)$ is increasing with limit $n$ as $k \te \infty$. So the limit version of
\eqref{SHinterp} is for $n = 1,2, \ldots$
\begin{align}
\label{liminterp}
\E \left[ \sum_{j=1}^\infty 1( \widehat{Q}_j = n) \right] = \sum_{j=1}^\infty \P( \widehat{Q}_j = n)  = \frac{1}{n}.
\end{align}
As observed by R\'enyi \cite{renyi}, the random variables $S_1, S_2, \ldots$, with
$S_n:= \sum_{j=1}^\infty 1( \widehat{Q}_j = n) $, are independent with the geometric distribution on $\{0,1,2, \ldots\}$ with
parameter $1 - 1/(n+1)$:
\begin{align}
\label{geom}
\P(S_n = s) = \frac{1}{(n+1)^s} \left( 1 -  \frac{1}{n+1} \right) \quad (n = 1,2, \ldots,\, s = 0,1, \ldots)
\end{align}
which also implies \eqref{liminterp}.

\quad Some known results about the iterated harmonic sums $\SH_k(n)$ can now be interpreted as features of the distributions of $\widehat{Q}_k$.
Arakawa and Kaneko \cite{AK}
%Multiple zeta values, poly-{B}ernoulli numbers, and related zeta functions (1999)
defined the function
\begin{align}
\xi_m(s) = \frac{1}{\Gamma(s)} \int_0^\infty t^{s-1} e^{-t} \frac{ \Li_m(1-e^{-t})}{ (1 - e^{-t} ) } dt ,
\label{arakan}
\end{align}
where $\Li_m$ is the polylogarithm function 
\[ \Li_m(z):= \sum_{n=1}^\infty z^n/n^m. \]
The integral converges for $\Re(s) >0 $
and the function $\xi_m(s)$ continues to an entire function of $s$. They showed that values of $\xi_m(s)$ for
$s$ can be expressed in terms of multiple zeta values, and observed in particular that 
$$
\xi_1(s) = s \zeta(s+1),
$$
which can readily be derived from \eqref{arakan} using the identity $ \Li_1(1-e^{-t}) =  t$.
%Attention: $\xi_m(1) = \zeta(m+1)$ as is evident in \eqref{arakan} by the change of variable $u = 1 - e^{-t}$ and switching 
%the order of integration and summation, but not for the other formula which interests us! 
Ohno \cite{Ohno} 
%A generalization of the duality and sum formulas on the multiple zeta values (1999)
then showed that for positive integer $m$ and $k$:
\begin{align}
\xi_m(k) = \sum_{1 \le n_1 \le \cdots \le n_k} \frac{ 1 }{ n_1 n_2 \cdots n_{k-1} n_k^{m+1} } = \sum_{n=1}^\infty \frac{ \SH_{k-1}(n) }{n^{m+1}}.
\label{ohno1}
\end{align}
That is, with the replacement $k \rightarrow k+1$ and taking $ m=1 $,
\begin{align}
\sum_{n=1}^\infty \frac{\SH_k(n)}{n^2 } = (k+1) \zeta(k+2)    \quad (k= 0,1,2, \ldots)
\label{ohno3}
\end{align}
Subtracting $1$ from both sides of this identity  gives a corresponding formula with summation from
$n=2$ to $\infty$ on the left. Comparing with the more elementary formula \eqref{hsumk}, and using
$$
\frac{1}{n} - \frac{1}{n+1} = \frac{1}{n(n+1)},
$$
it follows that
\begin{align}
\sum_{n=1}^\infty \frac{\SH_k(n+1)}{n (n+1)^2} = k + 2 - (k+1) \zeta(k+2)    \quad (k = 0,1,2, \ldots)
\label{ohno4}
\end{align}
Plugged into formula \eqref{vkdist} for the distribution of $\widehat{Q}_k$, this gives a formula for the first inverse
moment of $\widehat{Q}_k+1$:
\begin{align}
\E\left(\frac{1}{\widehat{Q}_k+1}\right) = 1 - k \zeta(k+1) + (k-1) \zeta(k).
\label{ohno5}
\end{align}

\quad By this stage, we have reached some identities for multiple zeta values which cannot be easily verified symbolically using Mathematica, though
they are readily checked for modest values of $k$ to limits of numerical precision.
The case $k = 1$ of \eqref{ohno3} reduces to
$$
\sum_{n=1}^\infty \frac{H(n)}{n^2} = 2 \zeta(3),
$$
%Borwein, Borwein and Girgensohn \cite{MR1335874}
%Explicit evaluation of {E}uler sums (1995)
which Borwein et al.\ \cite{BBG} attribute to Euler, and which can be confirmed symbolically on Mathematica.
The case $k = 2$ of \eqref{ohno3} expands to
\begin{equation} \label{2sum}
\sum_{n=1}^\infty \frac{\SH_2(n)}{n^2} = 
\frac{1}{2} \sum_{n=1}^\infty \frac{H^2(n)}{n^2} 
+ \frac{1}{2} \sum_{n=1}^\infty \frac{H_2(n)}{n^2}  = 3 \zeta(4),
\end{equation}
with $H_k(n) = \sum_{m=1}^{n}1/m^k$, which can also be confirmed symbolically on Mathematica. The decomposition into power sums is found to be
\begin{align}
\frac{1}{2} \sum_{n=1}^\infty \frac{H^2(n)}{n^2}  &= \frac{17}{8} \zeta(4)   \label{11}, \\
\frac{1}{2} \sum_{n=1}^\infty \frac{H_2(n)}{n^2}  &= \frac{7}{8}  \zeta(4)  \label{2},
\end{align}
and summing these two identities yields \eqref{2sum} as required.
According to Borwein and Borwein \cite{borwein_intriguing_1995}, formula \eqref{11} was first discovered numerically by Enrico Au-Yeun, and provided impetus
to the surge of effort at simplification of multiple Euler sums by Borwein and coauthors \cite{bailey_experimental_1994}. Borwein and Borwein's first
rigorous proof of \eqref{11} was based on the integral identity
$$
\frac{1} {\pi} \int_0^\pi \theta^2 \log^2 ( 2 \cos \theta/2) d \theta = \frac{11}{2} \zeta(4),
$$
which they derived with Fourier analysis using Parseval's formula. Later \cite{BBG}, they gave a systematic account of
evaluations of multiple harmonic sums, including \eqref{11} as an exemplar case. In particular, they made a systematic study of the Euler sums
\begin{align}
s_h(k,s):= \sum_{n=1}^\infty \frac{ H^k(n) }{ (n+1)^s },
\end{align}
and 
\begin{align}
\sigma_h(k,s):= \sum_{n=1}^\infty \frac{ H_k(n) }{ (n+1)^s },
\end{align}
and a number of other similar sums. They proved a number of exact reductions of such sums to evaluations of the zeta function at integer arguments and
established other such reductions beyond a reasonable doubt by numerical computation.

\quad With this development, we can prove the following result for $C_k^{1,1}$ the number of empty intervals among the first $k$ intervals at the stopping time $n(k,1)$
when the first sample point falls outside the union $[0,R_k)$ of these intervals.
\begin{proposition}
Let $C_k^{1,1}$ be defined by \eqref{C1} for $\ell = 1$ and $\theta = 1$. Then
\begin{equation}
\label{meanempty}
\mathbb{E}C_{k}^{1,1} =\left\{ \begin{array}{ccl} 
    1/2 + k - (k-1) \zeta(k) & \mbox{for } k \ge 2, \\
    1/2 & \mbox{for }k = 1,
    \end{array}\right.
\end{equation}
\end{proposition}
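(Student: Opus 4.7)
The plan is to reduce $\mathbb{E}C_k^{1,1}$ to a sum of inverse first moments of the chain $(\widehat{Q}_j^{1,1})$, which have already been evaluated in \eqref{ohno5}. From the distributional identity \eqref{CQ}, together with the definition \eqref{C1}, I have
\[ \mathbb{E}C_k^{1,1} \;=\; \mathbb{E}\widehat{C}_k^{1,1} \;=\; \sum_{j=1}^{k} \mathbb{P}\bigl(\widehat{Q}_j^{1,1} = \widehat{Q}_{j-1}^{1,1}\bigr). \]
Conditioning each summand on $\widehat{Q}_{j-1}^{1,1}$ and using that the diagonal of the GEM$(1)$ transition kernel $\widehat{q}(m,n)=m/(n(n+1))$ is $\widehat{q}(n,n)=1/(n+1)$ turns this into
\[ \mathbb{E}\widehat{C}_k^{1,1} \;=\; \sum_{j=0}^{k-1} \mathbb{E}\!\left[\tfrac{1}{\widehat{Q}_j^{1,1}+1}\right]. \]

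Next, I would evaluate the three types of summand that appear. The $j=0$ term is trivial: since $\widehat{Q}_0^{1,1}=1$ almost surely it contributes $1/2$, which already matches the asserted value at $k=1$. For $j\ge 2$, I would invoke \eqref{ohno5} to get $\mathbb{E}[1/(\widehat{Q}_j^{1,1}+1)] = 1 - j\zeta(j+1) + (j-1)\zeta(j)$. The case $j=1$ requires a separate direct evaluation, because the derivation of \eqref{ohno5} from \eqref{ohno4} uses the sum at $k-2\ge 0$; using instead only the first Ohno sum (the second term vanishes since $\SH_{-1}\equiv 0$ in \eqref{vkdist}) gives $\mathbb{E}[1/(\widehat{Q}_1^{1,1}+1)] = \sum_{n\ge 1} 1/(n(n+1)^2) = 2-\zeta(2)$.

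Finally, I would assemble the three contributions. The sum $\sum_{j=2}^{k-1}\bigl[1 - j\zeta(j+1) + (j-1)\zeta(j)\bigr]$ telescopes: after the reindexing $m=j+1$ in $\sum_{j=2}^{k-1} j\zeta(j+1)$, all intermediate terms in the two zeta-series cancel with their counterparts in $\sum_{j=2}^{k-1}(j-1)\zeta(j)$, leaving only the endpoints $(k-1)\zeta(k)$ and $\zeta(2)$ together with the arithmetic contribution $k-2$. Adding the boundary pieces $1/2$ and $2-\zeta(2)$, the two copies of $\zeta(2)$ cancel and the integer terms combine into $k+1/2$, producing the claimed formula $\mathbb{E}C_k^{1,1}=1/2+k-(k-1)\zeta(k)$ for all $k\ge 2$.

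The main technical issue is the mild boundary effect at $j=1$: a naive substitution into \eqref{ohno5} at $k=1$ gives $1-\zeta(2)$ (interpreting $0\cdot\zeta(1)$ as $0$) whereas the true value is $2-\zeta(2)$, the discrepancy coming from the convention $\SH_{-1}\equiv 0$ that makes \eqref{ohno4} applicable only once. Beyond this bookkeeping the proof is a one-line telescoping computation.
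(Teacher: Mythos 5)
Your proof is correct and follows essentially the same route as the paper's: both reduce $\mathbb{E}C_k^{1,1}$ to $\sum_j \mathbb{P}(\widehat{Q}_j^{1,1}=\widehat{Q}_{j-1}^{1,1})$, condition on the previous state to obtain inverse moments $\mathbb{E}[1/(\widehat{Q}_{j-1}^{1,1}+1)]$, and evaluate via the Ohno-type identity \eqref{ohno4} (you invoke its corollary \eqref{ohno5} and telescope; the paper substitutes \eqref{ohno4} term by term, which is the same computation). Your flagging of the $j=1$ boundary case, where \eqref{ohno5} does not apply and one must fall back to the single Ohno sum $2-\zeta(2)$, is exactly the point the paper handles by treating $j=2$ separately.
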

\begin{proof}
According to \eqref{CQ},
\begin{equation*}
    \mathbb{E}C_k^{1,1} = \sum_{j=1}^k \mathbb{P}(\widehat{Q}_j^{1,1} = \widehat{Q}_{j-1}^{1,1}),
\end{equation*}
where for $j \ge 2$ we use \eqref{vkdist} to evaluate $\mathbb{P}(\widehat{Q}_j^{1,1} = n)$. 
So for $j \ge 2$,
\begin{align*}
    \mathbb{P}(\widehat{Q}_j^{1,1} = \widehat{Q}_{j-1}^{1,1}) & = \sum_{n \ge 1} \mathbb{P}(\widehat{Q}_j^{1,1} = \widehat{Q}_{j-1}^{1,1} = n) \\
    & = \sum_{n \ge 1} \mathbb{P}(\widehat{Q}^{1,1}_{j-1} = n ) \widehat{q}^{1}(n,n) \\
    & = \sum_{n \ge 1}\frac{H_{j-2}^{*}(n+1) - H_{j-3}^{*}(n+1)}{n(n+1)^2} \\
    & = \left\{ \begin{array}{ccl} 
    1 - (j-1) \zeta(j) + (j-2) \zeta(j-1) & \mbox{for } j \ge 3, \\
    2 - \zeta(2) & \mbox{for }j = 2,
    \end{array}\right.
\end{align*}
where the last equality follows from \eqref{ohno4}. Also note that
\begin{equation*}
    \mathbb{P}(\widehat{Q}_1^{1,1} = \widehat{Q}_{0}^{1,1}) = \widehat{q}^1(1,1) = 1/2.
\end{equation*}
The formulae \eqref{meanempty} follow from the above computations.
\end{proof}

\quad Recall the interpretation of the binomial moments $\mathbb{E}\binom{C^{1,1}_k}{j}$ from \cite[(6.2)]{PT17}. The case $j = 1$ has been evaluated in \eqref{meanempty}. 
For $j = k$,
\begin{equation}
\mathbb{E} \binom{C_k^{1,1}}{k} = \mathbb{P}(C_k^{1,1} = k) = \frac{1}{2^k},
\end{equation}
and for $j = k-1$,
$$\mathbb{E} \binom{C_k^{1,1}}{k-1} = \frac{k}{2^k} + \mathbb{P}(C_k^{1,1} = k-1).$$
Note that
\begin{align*}
\mathbb{P}(C_k^{1,1} = k-1) &= \sum_{m=0}^{k-1} \sum_{n > 1} \widehat{q}^{1}(1,1)^m \widehat{q}^{1}(1,n) \widehat{q}^{1}(n,n)^{k-1-m} \\
                                              & = \frac{1}{2^{k-1}} - 2 \sum_{n \ge 1} \frac{1}{n(n+1)(n+2)^k}
\end{align*}
By partial fraction decomposition,  
$$ \frac{1}{n(n+1)(n+2)^k} = \frac{1}{2^k n} - \frac{1}{n+1} + \frac{2^k-1}{2^k(n+2)} + \sum_{j = 2}^k \frac{2^{k+1-j} - 1}{2^{k+1-j}(n+2)^j},$$
which leads to
$$
\sum_{n \ge 1}  \frac{1}{n(n+1)(n+2)^k}   = \frac{k}{2^{k+1}} - k + 1 + \sum_{j=2}^k \frac{2^{k+1-j} - 1}{2^{k+1-j}} \zeta(j).
$$
Therefore,
\begin{equation}
\label{k1mom}
\mathbb{E} \binom{C_k^{1,1}}{k-1}
= 2k - 2 + \frac{1}{2^{k-1}} - \sum_{j=2}^k \frac{2^{k+1-j} - 1}{2^{k-j}} \zeta(j)
\end{equation}
We have proved in Corollary \ref{gfC} that the random variables $C^{1,1}_k$ converges in distribution as $k \rightarrow \infty$. 
Consequently,
$$\mathbb{E} \binom{C_k^{1,1}}{k-1} \longrightarrow 0 \quad \mbox{as } k \rightarrow \infty.$$
It is easily seen from the expression  \eqref{k1mom} that this is equivalent to the well known formula
$$\sum_{n = 2}^\infty ( \zeta(n) - 1 ) = 1.$$
But the formulas for other binomial moments seem to be difficult, even for $j = 2$.
Generally, we are interested in the exact distribution of $C^{1,1}_k$ on $\{0,1,...k\}$ for $k = 1,2, \ldots$ Simple formulas are found for $k =1, 2,3, 4$ as displayed in the following table. \\

Table of $\mathbb{P}(C^{1,1}_k = j)$ with $0 \le j \le k$ for $k = 1,2,3,4$.
\begin{center}
\small
\begin{tabular}{ c | c  c  c  c  c  c c}
\multicolumn{1}{l}{$k$} &&&&&&&\\\cline{1-1} 
&&&&& \\
1 &$\frac{1}{2}$& $\frac{1}{2}$ & & & & \\
&&&&& \\
2 &$ - \frac{5}{4} + \zeta(2)$&$2 -\zeta(2)$& $\frac{1}{4}$& && \\
&&&&& \\
3 &$\frac{13}{8} - \frac{3}{2} \zeta(2)+\zeta(3) $&$-\frac{37}{8} + 3 \zeta(2)$&$\frac{31}{8} - \frac{3}{2}\zeta(2) - \zeta(3)$&$\frac{1}{8}$&&\\
&&&&& \\
4 &$-\frac{29}{16} + \frac{7}{4} \zeta(2) - \frac{3}{2} \zeta(3) + \zeta(4)$ &$\frac{57}{8} -\frac{21}{4} \zeta(2) + \frac{3}{2} \zeta(3)$&$-\frac{41}{4} + \frac{21}{4} \zeta(2) + \frac{3}{2} \zeta(3)$& $\frac{47}{8} - \frac{7}{4} \zeta(2) - \frac{3}{2}\zeta(3) -\zeta(4) $&$\frac{1}{16}$& \\
&&&&& \\ \hline
\multicolumn{1}{l}{} &0&1&2&3&4& ~$j$\\
\end{tabular}
\end{center}

The details are left to the reader. Observe that up to $k = 4$,  all of the point probabilities in the distribution of  $C_k^{1,1}$ are rational linear combinations of zeta values. This is also true with $j = 0, k-1, k$ for all $k$.
We leave open the problem of finding an explicit formula for $\P( C_k^{1,1} = j)$ for general $j$ and $k$, but make the following conjecture:
\begin{conj}
For each $k \ge 1$ and $0 \le j \le k$, 
$$\mathbb{P}(C^{1,1}_k = j) = q_{k,1} + \sum_{j = 2}^k q_{k,j} \zeta(j), $$
with $q_{k,j}$ rational numbers.
\end{conj}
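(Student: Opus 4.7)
The strategy is to extend the path-decomposition argument that underlies Proposition~\ref{prop:conju} (the case $j=0$) to every $j\in\{0,1,\ldots,k\}$. Using the joint law \eqref{vist}, a path $(\widehat{Q}_0=1,\widehat{Q}_1,\ldots,\widehat{Q}_k)$ with exactly $j$ ties is parametrised by its strictly increasing sequence of distinct values $1=v_0<v_1<\cdots<v_r$ (with $r=k-j$) together with multiplicities $c_0,c_1,\ldots,c_r\geq 1$ satisfying $\sum_\alpha c_\alpha=k+1$. The telescoping $\prod_{\alpha=1}^r v_{\alpha-1}/v_\alpha=1/v_r$ yields
\begin{equation*}
\mathbb{P}(C_k^{1,1}=j)=\sum_{\substack{c_0,\ldots,c_r\geq 1\\ \sum c_\alpha=k+1}}\frac{1}{2^{c_0-1}}\,S_r(c_1,\ldots,c_r),\qquad S_r(c_1,\ldots,c_r):=\sum_{1<v_1<\cdots<v_r}\frac{1}{v_r\prod_{\alpha=1}^r(v_\alpha+1)^{c_\alpha}}.
\end{equation*}

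The next step is to reduce each $S_r$ to multiple zeta values of total weight at most $k$. Substituting $w_\alpha=v_\alpha+1\geq 3$ and applying the partial-fraction identity $\frac{1}{u(u+1)^{c}}=\frac{1}{u}-\sum_{s=1}^c\frac{1}{(u+1)^s}$ to the last factor $\frac{1}{(w_r-1)w_r^{c_r}}$, one expresses $S_r$ as a combination of a multiple Hurwitz zeta of the shape $\zeta(c_1,\ldots,c_{r-1},1;3)$ and standard multiple zeta sums of weight $\leq k$ with all indices starting from $3$. The Hurwitz piece is trimmed down to argument $x=0$ by iterated use of the shift recursion \eqref{Hzetarel}, and the restriction $w_\alpha\geq 3$ is removed by adding back the finitely many rational contributions coming from $w_\alpha\in\{1,2\}$. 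Iterating the same partial-fraction step on $w_{r-1},w_{r-2},\ldots,w_1$, each $S_r(c_1,\ldots,c_r)$ becomes an explicit rational linear combination of multiple zeta values $\zeta(s_1,\ldots,s_m)$ of weight $\leq k$.

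The \emph{main obstacle} is then to show that, after summing over the compositions $(c_0,\ldots,c_r)$, the multiple zeta values which survive collapse to rational linear combinations of the single values $1,\zeta(2),\ldots,\zeta(k)$. This is not automatic, since generic multiple zeta values are not expected to reduce to single ones. The proof of Proposition~\ref{prop:conju} for $j=0$ succeeded because only the specific pattern $\zeta(\underbrace{1,\ldots,1}_{k-2},2)$ appeared, which equals $\zeta(k)$ by the Hoffman--Zagier duality \eqref{keyzeta}; the verified entries for $k\leq 4$ in the table suggest that the patterns surviving for general $j$ still belong to the ``reducible'' family. A concrete attack is to exhibit, for each composition $(c_0,\ldots,c_r)$, an explicit reduction of $S_r$ to single zeta values by iterated Hoffman--Zagier duality together with the shuffle and stuffle relations, in the spirit of the Euler-sum evaluations \eqref{ohno3}--\eqref{ohno5}. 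An alternative, inductive route works with $p_{k,j}^{(\ell)}:=\mathbb{P}(C_k^{\ell,1}=j)$; conditioning on $\widehat{Q}_1$ gives
\begin{equation*}
p_{k,j}^{(1)}=\tfrac{1}{2}\,p_{k-1,j-1}^{(1)}+\sum_{n\geq 2}\frac{1}{n(n+1)}\,p_{k-1,j}^{(n)},
\end{equation*}
and one would prove by induction on $k$ that $p_{k,j}^{(\ell)}=R_0(\ell)+\sum_{s=2}^{k}R_s(\ell)\,\zeta(s)$ for explicit rational functions $R_s(\ell)$ built from polygamma values at $\ell$ (compare Proposition~\ref{prop:ukfromq}); the key difficulty is again to identify the family of $R_s$ so that the inductive step closes without producing genuinely irreducible multiple zetas.
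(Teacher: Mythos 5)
The statement you are trying to prove is explicitly labelled a \emph{conjecture} in the paper, and the authors state just before it: ``We leave open the problem of finding an explicit formula for $\P(C_k^{1,1}=j)$ for general $j$ and $k$.'' There is no proof in the paper to compare against, so the only possible verdict is on whether your attempt closes the gap the authors left open. It does not, and you say so yourself.

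Your setup is sound. The path decomposition from \eqref{vist} into distinct values $1=v_0<v_1<\cdots<v_r$ with multiplicities $c_0,\ldots,c_r\geq 1$ summing to $k+1$ (so $r=k-j$) is correct, the telescoping that produces the factor $1/v_r$ is correct, and the resulting expression
\[
\P(C_k^{1,1}=j)=\sum_{\substack{c_0,\ldots,c_r\geq 1\\\sum c_\alpha=k+1}}\frac{1}{2^{c_0-1}}\,S_r(c_1,\ldots,c_r)
\]
is an accurate combinatorial rewriting. The reduction of each $S_r$ to multiple Hurwitz zeta values by the shift identity \eqref{Hzetarel}, plus rational corrections from the small-$w_\alpha$ cases, is also a legitimate route and is how the paper itself handles $j=0$.

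However, the ``main obstacle'' you name is exactly the content of the conjecture and you do not resolve it. For $j=0$ the paper gets by because the only pattern that appears is $\zeta(1,\ldots,1,2)$, which equals $\zeta(k)$ by the Hoffman--Zagier duality \eqref{keyzeta}. For general $j$ the compositions $(c_1,\ldots,c_r)$ produce multiple zeta values with a variety of index strings, and it is a well-known fact that generic multiple zeta values of weight $\geq 5$ are \emph{not} expected to be $\BQ$-linear combinations of $1,\zeta(2),\ldots,\zeta(k)$ (e.g.\ $\zeta(3,2)$ is conjecturally irrational over the span of $1$, $\zeta(2)$, $\zeta(3)$, $\zeta(5)$, $\zeta(2)\zeta(3)$ only, not of $1,\zeta(2),\ldots,\zeta(5)$). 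You would need to exhibit, for each composition, a reduction via duality/shuffle/stuffle --- and then verify that the specific linear combination indexed by your compositions cancels all irreducible pieces. You gesture at this but provide no mechanism. Likewise your alternative recursion $p_{k,j}^{(1)}=\frac12 p_{k-1,j-1}^{(1)}+\sum_{n\geq 2}\frac{1}{n(n+1)}p_{k-1,j}^{(n)}$ is correct, but the step that ``the inductive hypothesis closes without producing genuinely irreducible multiple zetas'' is precisely what is missing. In short: your proposal is a reasonable research plan, and it correctly locates the hard step, but it does not constitute a proof; the conjecture remains open after your argument just as it was before.
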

%-------------------------------------------------------------------------------------------------
\section{Evaluation of $u_{2:n}$ and its limit}
\label{sec:u2}

\quad In this section we derive an explicit formula for $u_2$ in the general GEM$(\theta)$ case, which is based on evaluation of the combinatorial expressions of $u_{2:n}$ given later in \eqref{5kinds}.
In principle, the analysis of $u_{2:n}$ can be extended to $u_{k:n}$ for $k \ge 3$, but there will be an annoying proliferation of cases. Already for $k = 2$, it requires considerable care not to overcount or undercount the cases. 

\quad Let $\Pi_n$ be the partition of $[n]$ generated by a random permutation of $[n]$, or, more generally, by any consistent sequence of exchangeable random partitions of $[n]$, with {\em exchangeable partition probability function (EPPF)} $p$. 
See \cite{pitmanbook} for background. 
The function $p$ is a function of compositions $(n_1, \ldots, n_k)$
of $n$, 
which gives for every $m \ge n$ the
probability $p(n_1, \ldots, n_k)$
that for each particular listing of elements of $[n]$ by a permutation, say $(x_1, \ldots, x_n)$, that the first $n_1$ elements
$\{x_1, \ldots, x_{n_1} \}$ fall in one block of $\Pi_m$, and 
if $n_1 < n$ the next $n_2$ elements $\{x_{n_1 + 1}, \ldots, x_{n_1 + n_2} \}$ fall in another block of $\Pi_m$, 
and if $n_1 + n_2 < n$ the next $n_3$ elements $\{x_{n_1 + n_2 + 1}, \ldots, x_{n_1 + n_2 + n_3} \}$ fall in a third block of $\Pi_m$, 
and so on.  In other words, $p(n_1, \ldots, n_k)$ is the common probability, for every $m \ge n$, that the restriction of $\Pi_m$ to $[n]$ equals any particular partition of $[n]$ whose blocks are of sizes $n_1, \ldots , n_k$.
For the Ewens $(\theta)$ model, there is the well known formula
$$
p_\theta( n_1, \ldots, n_k ) = \frac{ \theta ^{k-1} } { ( 1 + \theta ) _{n-1} } \prod_{i=1}^k  (1)_{n_i - 1}, \mbox{ where } n = n_1 + \cdots + n_k.
$$
This formula for $\theta = 1$ 
$$
p_1( n_1, \ldots, n_k ) = \frac{ \prod_{i=1}^k  (n_i - 1)!   } {n! }, \mbox{ where } n = n_1 + \cdots + n_k,
$$
corresponds to the case when $\Pi_n$ is the partition of $[n]$ generated by the cycles of a uniformly distributed random permutation of $[n]$. 
Then the denominator $n!$ is the number of permutations of $[n]$, while the product in the numerator is the obvious enumeration of the number of permutations of $n$
in which $[n_1]$ forms one cycle, and  $[n_1 + n_2 ] \setminus [n_1]$ forms a second cycle, and so on. Essential for following arguments is the less obvious
{\em consistency property} of uniform random permutations, that $p_1(n_1, \ldots, n_k)$ is also, for every $m \ge n$, the probability that 
$[n_1]$ is the restriction to $[n]$ of one cycle of $\Pi_m$, and $[n_1 + n_2 ] \setminus [n_1]$ the restriction to $[n]$ of a second cycle of $\Pi_m$, and so on. 
This basic consistency property of random permutations allows the sequence of random partitions $\Pi_n$ of $[n]$ to be constructed according to the {\em Chinese Restaurant Process}, so the restriction of $\Pi_m$ to $[n]$ is $\Pi_n$ for every $n < m$.

\quad Consider first for $n \ge 2$ the probability that the same block of $\Pi_n$ is discovered first in examining elements of $[n]$ from left to right as in examining elements of $[n]$ from right to left.
This is the probability that $1$ and $n$ fall in the same block of $\Pi_n$. By exchangeability, this is the same as the probability that $1$ and $2$ fall in the same block, that is
\begin{equation}
u_{1:n}:= \P ( \mbox{$1$ and $n$ in the same block} ) =  p(2) \eqth  p_\theta(2) = \frac{ 1 } { 1 + \theta }.  %%%\qquad ( n \ge 2 ).
\end{equation}
Next, consider for $n \ge 3$ the probability $u_{2:n}$ that the union of the first two blocks found in sampling left to right equals 
the union of the first two blocks found in sampling right to left.  
\begin{proposition}
For each $n \ge 3$, and each exchangeable random partition $\Pi_n$ of $[n]$ with EPPF $p$,
\begin{equation}
\label{5kinds}
\begin{aligned}
u_{2:n} &= p(n) + (n-2) p(n-1,1) + \sum_{1 < j < k < n } p( j-1 + n - k , 2 ) \\
&\quad + \sum_{j=1}^{n-1} p(j,n-j) + \sum_{1 < j < k < n } p(j, n-k+1) .
\end{aligned}\end{equation}
\end{proposition}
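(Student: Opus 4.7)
The plan is to partition the event $E_{2:n} := \{C_{1:n} \cup C_{2:n} = R_{1:n} \cup R_{2:n}\}$ according to the configuration of the blocks containing the four distinguished elements $1$, $n$, $a := \min([n] \setminus C_{1:n})$, and $b := \max([n] \setminus R_{1:n})$, and to compute the probability of each configuration using the consistency/sampling property of the EPPF. That property, which is what makes $p(n_1, \ldots, n_k)$ a well-defined function of the block sizes alone, says that $p(n_1, \ldots, n_k)$ is the probability that the restriction of $\Pi_n$ to any prescribed set of $n_1 + \cdots + n_k$ elements has any prescribed partition into blocks of those sizes.

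First I would split $E_{2:n}$ according to whether $1$ and $n$ lie in the same block (so $C_{1:n} = R_{1:n}$) or in different blocks. In the same-block case, the event reduces to $C_{2:n} = R_{2:n}$, i.e.\ the block of $a$ equals the block of $b$. This further subdivides into the degenerate subcase $C_{1:n}=[n]$ (contributing $p(n)$, the first term), the subcase $a=b$ which forces $[n]\setminus C_{1:n} = \{a\}$ to be a singleton block and $C_{1:n}$ to have size $n-1$ (contributing $(n-2)\,p(n-1,1)$ after summing over the $n-2$ interior values of $a$), and the subcase $a<b$. In the last subcase, writing $a=j$ and $b=k$ with $1<j<k<n$, the definitions of $a$ and $b$ force $\{1,\ldots,j-1\}\cup\{k+1,\ldots,n\}\subset C_{1:n}$ while the event requires $\{j,k\}$ to lie in a common block distinct from $C_{1:n}$; by consistency this event has probability $p(j-1+n-k,2)$, independent of where the unconstrained middle elements $\{j+1,\ldots,k-1\}$ land, giving the third term.

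In the different-block case, the equation $\{C_{1:n},C_{2:n}\}=\{R_{1:n},R_{2:n}\}$ forces $C_{2:n}=R_{1:n}$ and $R_{2:n}=C_{1:n}$, which translates to $a\in R_{1:n}$ and $b\in C_{1:n}$. Combined with $\{1,\ldots,a-1\}\subset C_{1:n}$ and $\{b+1,\ldots,n\}\subset R_{1:n}$ from the definitions of $a$ and $b$, this rules out $a=b$ and splits on the sign of $b-a$. If $b<a$, the forced inclusions leave no room for elements strictly between $b$ and $a$ (they would have to lie in both $C_{1:n}$ and $R_{1:n}$), hence $b=a-1$ and $\Pi_n$ is the interval two-block partition $(\{1,\ldots,j\},\{j+1,\ldots,n\})$ with $j=b$; summing over $j\in\{1,\ldots,n-1\}$ yields the fourth term $\sum_{j=1}^{n-1} p(j,n-j)$. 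If $a<b$, setting $j=a$ and $k=b$ with $1<j<k<n$, the forced inclusions become $\{1,\ldots,j-1,k\}\subset C_{1:n}$ (of size $j$) and $\{j,k+1,\ldots,n\}\subset R_{1:n}$ (of size $n-k+1$), again with $\{j+1,\ldots,k-1\}$ unconstrained; another appeal to consistency produces the contribution $p(j,n-k+1)$ and the fifth term.

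The main obstacle is the case analysis in the different-block case: one must verify that the prefix/suffix constraints together with the disjointness of $C_{1:n}$ and $R_{1:n}$ rigidly force $b=a-1$ and hence the full two-block interval structure on $[n]$ in the $b<a$ branch, while in the $a<b$ branch leaving the middle elements genuinely free so that the EPPF-consistency collapses all those sub-configurations into the single value $p(j,n-k+1)$. Once this bookkeeping is carried out and one checks that the five cases are disjoint and exhaustive, summing their contributions gives \eqref{5kinds}.
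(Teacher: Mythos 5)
Your decomposition is the same as the paper's: you split the event on whether $1$ and $n$ are in the same block of $\Pi_n$, and in each case you identify the two blocks found first from both ends via the distinguished elements $a=\min([n]\setminus C_{1:n})$ and $b=\max([n]\setminus R_{1:n})$, then pass to the restriction of $\Pi_n$ to $\{1,\dots,j\}\cup\{k,\dots,n\}$ and invoke the exchangeability/consistency property of the EPPF to read off the probability from the block sizes of that restriction. This is precisely the paper's argument, organized around the same five cases and the same appeal to the sampling property; your write-up is slightly more explicit than the paper in spelling out why the constraints force $b=a-1$ and hence the two-block interval partition in the $b<a$ branch, but the content is the same.
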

\begin{proof}
The terms in \eqref{5kinds} are accounted for as follows:
\begin{itemize}
\item $p(n)$ is the probability that there is only one block, with $1$ and $n$ in this block.
\item $(n-2) p(n-1,1)$ is the probability that there are only two blocks, with both $1$ and $n$ in this block, while the second block a singleton, which may be
$\{j\}$ for any one of the $n-2$ elements $j \in \{ 2, \ldots, n-1 \}$.
\item $\sum_{1 < j < k < n } p( j-1 + n - k , 2 ) $ is the sum of the probabilities that there are two or more blocks, with both $1$ and $n$ in this block, with $j$ the
first element and $k$ the last element of some second block, which is both the second block to appear from left to right, and the second block to appear from right to left.
The probability of this event determined by $1 < j < k < n$ is the probability that the set $[j] \cup ([n] \setminus [k-1])$ of $j + 1 + n - k $ elements is split by the partition into the two particular subsets 
$[j-1] \cup ([n] \setminus [k])$ and $\{j\} \cup \{k\}$ of $j-1 + n - k $ and $2$ respectively. 
Hence the $p( j-1 + n - k , 2 )$, by the exchangeability and consistency  properties of the random partitions of various subsets of $[n]$.  
\item $\sum_{j=1}^{n-1} p(j,n-j)$ is the probability of the event that there are exactly two blocks $[j]$ and $[n] \setminus [j]$ for some $1 \le j < n$.
\item $\sum_{1 < j < k < n } p(j, n-k+1)$ is the sum of the probabilities that there are two or more blocks, with $1$ and $n$ in different blocks, with $j$ the
first element of the block containing $n$, which is the second block to appear from left to right, and $k$ the last element of the block containing $1$, which is the second block to appear from right to left.
The probability of this event determined by $1 < j < k < n$ is the probability that the set $[j] \cup ([n] \setminus [k-1])$ of $j + 1 + n - k $ elements is split by the partition into the two particular subsets 
$[j-1] \cup \{k\}$ and $\{j \} \cup ([n] - [k])$ of sizes $j$ and $n-k+1$ respectively. Hence the $p(j,n-k+1)$, again by the exchangeability and consistency  properties of the random partitions of various subsets of $[n]$. 
\end{itemize}
\end{proof}

\quad In this classification of five kinds of terms contributing to the probability $u_{2:n}$, the first three kinds account for all cases in which $1$ and $n$ fall in the same block, while
the last two kinds account for all cases in which $1$ and $n$ fall in different blocks.  The double sum for the third kind of term is $0$ unless $n \ge 4$, in which case it always simplifies to a
single sum by grouping terms according to the value $h$ of $j-1 + n - k$ :
\begin{align}
\label{prob1} \sum_{1 < j < k < n } p( j-1 + n - k , 2 )  &= \sum_{h=2}^ {n-2} (h-1) p( h,2 )  \\
\label{prob1th}  & \eqth \left[ p_\theta(2) - p_\theta(n) - (n-2) p_\theta(n-1,1) \right] \, p_\theta (2)  
%\label{prob1perm}  & \eqone \left[ \frac{ 1 } { 2 } - \frac{ 2 n - 3 } { n (n-1) }  \right] \, \frac{ 1 } { 2 } 
\end{align}
with further simplification as indicated by $\eqth$ for the Ewens $(\theta)$ model.
For the Ewens $(\theta)$ model, the $p(h,2)$ in \eqref{prob1} becomes
$$
p_\theta( h,2 )  = \frac{ \theta (1)_{h-1} }{ (1 + \theta )_{h+1} }
$$
while the expression in \eqref{prob1th} features
$$
p_\theta(2) = \frac{ 1 } { 1 + \theta };
\qquad
p_\theta(n) = \frac{ (1)_{n-1} } { (1 + \theta )_{n-1}} ;
\qquad
p_\theta(n-1,1) = \frac{ \theta \, (1)_{n-2} } { (1 + \theta )_{n-1} }.
$$
The evaluation $\eqth$ in \eqref{prob1th} is easily checked algebraically, by first checking it for $n = 4$, then checking the equality of differences
as $n$ is incremented. This evaluation \eqref{prob1th} is an expression of the well known characteristic property of {\em non-interference} in the Ewens $(\theta)$ model, according to which, given that $1$ and $n$ fall in the
same block of some size $b$ with $2 \le b < n$, the remaining $n-b$ elements are partitioned by according to the Ewens $(\theta)$ model for $n-b$ elements.
The sums in \eqref{prob1} evaluate the probability that $1$ and $n$ fall in the same block, whose size $b$ is at most $n-2$, and that the partition of the
remaining $n-b \ge 2$ elements puts the least of these element in the same block as the greatest of these elements.  For a general EPPF the
probability that $1$ and $n$ fall in the same block, whose size $b$ is at most $n-2$, is $p(2) - p(n) - (n-2) p(n-1,1)$, as in the first factor of 
\eqref{prob1th} for $p = p_\theta$.  For the Ewens $(\theta)$ model, given this event and the size $b \le n-2$ of the block containing $1$ and $n$, the probability that the remaining $n-b \ge 2$ elements 
have their least and greatest elements in the same block is just $p_\theta(2)$, regardless of the value of $b$. Hence the factorization in the expression of \eqref{prob1th} for the Ewens $(\theta)$ model.

\quad While the sum of the first three kinds of terms in \eqref{5kinds} can be simplified as above in the Ewens $(\theta)$ model, even for $\theta = 1$ there is no comparable simplification for the
sum of the last two kinds of terms in \eqref{5kinds}, representing the probability of the event that $1$ and $n$ fall in different blocks,  
while the same union of the first two blocks is found by examining elements from left to right as in examining elements from right to left.
Asymptotics as $n \to \infty$ are easy for the sum of the first three kinds of terms in \eqref{5kinds}. The limit of the contribution of these three terms is $p_\theta(2)^2 = (1 + \theta)^{-2} \eqone 1/4$.
As for the remaining two kinds of terms, it is obvious that $\sum_{j=1}^{n-1} p(j,n-j) \to 0$ for any partition structure, since this is the probability that there are only two classes, and
all elements of one class appear in a sample of size $n$ before all members of the other class. So for the Ewens $(\theta)$ model this gives
\begin{align}
\lim_{n \to \infty} u_{2:n} &\overset{\theta}{=} (1 + \theta)^{-2} + \lim_{n\to \infty} \sum_{1 < j < k < n } \frac{ \theta (1)_{j-1}  (1 )_{n-k} }{ ( 1 + \theta )_{j + n - k } } \notag \\
&= (1 + \theta)^{-2} + \sum_{j=2}^{\infty} \frac{\theta(1)_{j-1}}{(\theta+j-1)(\theta+1)_{j}}  
% &\overset{1}{=} \frac{1}{4}+\sum_{j=2}^{\infty} \frac{1}{j^2(j+1)}  = \zeta(2)-\frac{5}{4}
\end{align}
where the limit can also be evaluated as an integral with respect to the joint distribution of $P_1 = W_1$ and $P_2 = (1-W_1)W_2$ for $W_i$ independent beta $(1,\theta)$ variables.
We can write
\begin{equation} 
u_2 \overset{\theta}{=}  \frac{1}{(\theta+1)^2} + \frac{1}{\theta+1}\left (\thrftwo{1}{1}{\theta}{\theta+1}{\theta+2}{1}-1\right ),
\end{equation}
in terms of the generalized hypergeometric function $_3F_2$.
Lima \cite[Lemma 1]{lima2012rapidly} %%title={A rapidly converging Ramanujan-type series for Catalan's constant}, author={Lima, FMS},
gives the following formula for {\em Catalan's constant} $G$:
\begin{equation}\label{eq:lima}
\frac{1}{2}\thrftwo{1}{1}{\frac{1}{2}}{\frac{3}{2}}{\frac{3}{2}}{1} = G = \beta(2),
\end{equation}
where $\beta(s) := \sum_{n=1}^{\infty}\frac{(-1)^n}{(2n+1)^s}$ for $s>0$.
By manipulating the hypergeometric $_3 F_2$ function, one can see that for $\theta = n+1/2$ where $n$ is an integer, $u_2$ is of the form $q+rG$, where $q$ and $r$ are rational numbers.
Lima's articles
\cite{MR3357692, MR2968884} 
contain many related formulas, and references to zeta and beta values.

\bigskip
{\bf Acknowledgement:} 
We thank David Aldous for various pointers to the literature.

\bibliographystyle{plain}
\bibliography{unique}
\end{document}